\newtheorem{thm}{Theorem}[section]
\newtheorem{cor}[thm]{Corollary}
\newtheorem{lem}[thm]{Lemma}
\newtheorem{prop}[thm]{Proposition}
\newtheorem{defn}[thm]{Definition}
\newtheorem{correspondence}[thm]{Correspondence}
\theoremstyle{remark}
\newtheorem{rem}[thm]{Remark}
\numberwithin{equation}{section}
\newcommand{\mycomment}[1]{}
\newcommand{\ZZ}{\mathbb Z}
\newcommand{\cA}{\mathcal{A}}
\newcommand{\cJ}{\mathcal{J}}
\newcommand{\tC}{\Tilde{C}}
\newcommand{\cE}{\mathcal{E}}
\newcommand{\cO}{\mathcal{O}}
\newcommand{\cR}{\mathcal{R}}
\newcommand{\s}{\sigma}
\newcommand{\st}{\sigma\tau}
\newcommand{\unnumberedfootnote}[1]{%
  \begingroup
  \renewcommand{\thefootnote}{}%
  \footnotetext[0]{#1}%
  \endgroup
}
\DeclareMathOperator{\Aut}{Aut}
\DeclareMathOperator{\Gal}{Gal}
\DeclareMathOperator{\id}{id}
\DeclareMathOperator{\Id}{Id}
\DeclareMathOperator{\Fix}{Fix}
\DeclareMathOperator{\im}{Im}
\DeclareMathOperator{\Nm}{Nm}
\DeclareMathOperator{\Pic}{Pic}
\DeclareMathOperator{\End}{End}
\title{Pryms of $\mathbb{Z}_3\times\mathbb{Z}_3$ coverings of genus 2 curves}
\author{Pawe\l{} Bor\'owka, Anatoli Shatsila}
\begin{document}
\begin{abstract}
We study unramified Galois $\mathbb{Z}_3 \times \mathbb{Z}_3$-coverings of genus 2 curves and the corresponding Prym varieties and Prym maps. In particular, we prove that any such covering can be reconstructed from its Prym variety, that is, the Prym--Torelli theorem holds for these coverings. We also investigate the Prym map of unramified $G$-coverings of genus 2 curves for an arbitrary abelian group $G$. We show that the generic fiber of the Prym map is finite unless $G$ is cyclic of order less than 6.
\end{abstract}
\maketitle

\unnumberedfootnote{\textit{2020 Mathematics Subject Classification:} 14H40, 14H30, 14H45, 14K12}
\unnumberedfootnote{\textit{Key words and phrases:} Prym variety, Prym map, coverings of curves.}

\section{Introduction}

Given a finite morphism between smooth curves, one can canonically associate to it a polarised abelian variety, the Prym variety. This induces a map from the moduli space of coverings to the moduli space of polarised abelian varieties, known as the Prym map. Classically, the main aim of the Prym theory has been to understand the Prym map for unramified double coverings. However, more recently, other types of coverings have gained a lot of interest. Before going into details, let us introduce some notation.

Let $f:C'\to C$ be a covering of smooth curves of genera $g'$ and $g$. It induces the so-called Norm map $Nm_f:JC'\to JC$ that is defined as a linear extension of the formula $Nm_f(P)=f(P)$ (where we identify the Jacobian variety with the Picard group of degree \(0\) divisors on a curve). Then, the Prym variety $P(f) = P(C'/C)$ of a covering $f$ can be defined as an abelian subvariety that is the connected component of the kernel of the Norm map. It carries a natural polarisation given by the restriction of the principal polarisation on $JC'$. The Prym map assigns to a covering its (polarised) Prym variety. In some cases, the geometry of a Prym map is quite surprising. In particular, there are Prym maps of degree $27, 16, 10$ and $3$, see the survey \cite{Oams} and references therein. On the other hand, not much is known for many types of coverings. Even if we assume that the genus of the bottom curve $2$ and the covering is unramified, we have only partial results. Recently, the generic injectivity of the Prym map of unramified cyclic coverings of genus 2 curves of degrees equal to Sophie Germain primes was proved in \cite{sophie}. This result was generalised in the preprint \cite{naranjo2024simplicityjacobiansautomorphisms} to other primes and genera.
In \cite{Ago}, there is a result stating that the Prym maps of unramified cyclic coverings of degree greater than 5 are generically finite. In \cite{BO19}, the authors have described $\ZZ_2\times\ZZ_2$ coverings and showed the global injectivity of the Prym map in this case.

Following the results mentioned above, we have decided to investigate more closely abelian coverings of genus \(2\) curves, i.e. Galois coverings $f:\tC\to C$ with the Galois group being an abelian group. By \cite{Pardini}, any such covering can be constructed from a curve $C$ and a subgroup of its Jacobian $JC$. The Prym maps of such coverings have been investigated in \cite{Mohajer}, however, the results obtained there cannot be applied to genus \(2\) curves. 

The first part of the manuscript is devoted to the study of $\mathbb{Z}_3\times \mathbb{Z}_3$-coverings of genus two curves and their Prym maps. 
Similarly to \cite{BO19}, the moduli space $\mathcal{R}_2^{\mathbb{Z}_3\times \mathbb{Z}_3}$ of $\mathbb{Z}_3\times \mathbb{Z}_3$-coverings consists of two irreducible components: the component $(\mathcal{R}_2^{\mathbb{Z}_3\times \mathbb{Z}_3})^{iso}$ of isotropic coverings and $(\mathcal{R}_2^{\mathbb{Z}_3\times \mathbb{Z}_3})^{ni}$ of non-isotropic coverings, see Proposition \ref{moduli} and Definition \ref{defniso}. 
The main result of the paper is the global injectivity of the Prym map on both components:
\begin{thm}[Theorem \ref{mainthm}]
    The Prym maps $$\mathcal{P}^{iso}:(\mathcal{R}_2^{\mathbb{Z}_3\times \mathbb{Z}_3})^{iso} \to \cA^{(1,1,1,1,1,1,3,3)}_8$$ 
    $$\mathcal{P}^{ni}:(\mathcal{R}_2^{\mathbb{Z}_3\times \mathbb{Z}_3})^{ni} \to\cA^{(1,1,1,1,1,1,1,9)}_8$$
    are injective.
\end{thm}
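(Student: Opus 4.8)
The Prym map retains only the polarised abelian variety $P=P(\tC/C)$, so injectivity is equivalent to reconstructing the covering $f\colon\tC\to C$ from $P$. By the description of these coverings (\cite{Pardini} and Proposition~\ref{moduli}), $f$ is determined by the pair consisting of the genus $2$ curve $C$ and the rank-$2$ subgroup $H\subset JC[3]$ defining it, isotropic or non-isotropic in the sense of Definition~\ref{defniso}. The plan is to recover this pair from $P$ in three stages: (i) recover the Galois action and the four intermediate cyclic triple covers; (ii) reconstruct $C$; and (iii) reconstruct $H$, thereby separating points within each component. Since the two maps have different target polarisation types, the kernel $K(\lambda)$ of the polarisation $\lambda\colon P\to\widehat P$ is $(\ZZ/3)^4$ in the isotropic case and $(\ZZ/9)^2$ in the non-isotropic case; the images are thus disjoint and the two components may be treated by the same argument, run separately.

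For stage (i), note that $G=\ZZ_3\times\ZZ_3$ acts on $\tC$, hence on $J\tC$, preserving $P$. The four order-$3$ subgroups $H_1,\dots,H_4<G$ yield the four intermediate covers $C_i=\tC/H_i\to C$, each an unramified cyclic triple cover of a genus $2$ curve, so that $g(C_i)=4$ and $A_i:=P(C_i/C)$ is an abelian surface with multiplication by $\ZZ[\zeta_3]$; moreover $P$ is isogenous to $A_1\times\cdots\times A_4$. The first task is to show this decomposition is intrinsic to $P$. Generically $\End(P)\otimes\QQ\cong\QQ(\zeta_3)^4$, and its primitive idempotents cut out the surfaces $A_i=\Fix(H_i)^0$ together with their induced polarisations and $\ZZ[\zeta_3]$-structure, thereby recovering the $G$-action and the four intermediate covers. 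Establishing that this holds at \emph{every} point of the moduli space — including the special loci where some $A_i$ become isogenous, where the endomorphism algebra alone no longer suffices and one must invoke the polarisation type — is already a nontrivial part of the argument, for which I would use the structural results on $P$ proved earlier.

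Stage (ii) is the crux. A cyclic triple cover of a genus $2$ curve lies in the non-injective range (the Prym map of cyclic covers of degree below $6$ has positive-dimensional fibres, cf.\ \cite{Ago}), so no single surface $A_i$ determines $C_i\to C$, let alone $C$. The plan is therefore to exploit the four Pryms jointly. Inside $P$ the surfaces $A_i$ intersect along prescribed finite subgroups, and this gluing, recorded by the polarisation $\lambda$ and compatible with the principal polarisation of $J\tC\sim JC\times P$, rigidifies the whole configuration. Using that the norm $\Nm$ and the pullback relate each $A_i$ to $JC$ as complementary abelian subvarieties inside $JC_i$, I would reconstruct $JC$ as the polarised complement canonically determined by the assembled configuration of the four $\ZZ[\zeta_3]$-surfaces, and then apply the Torelli theorem for genus $2$ to recover $C$. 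The main obstacle is precisely to make rigorous how the joint data of the four surfaces pins down $JC$ uniquely, overcoming the failure of Prym--Torelli for any individual triple cover.

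In the final stage, with $C$ recovered, I would read off $H$ from the kernel $K(\lambda)$ of the polarisation on $P$. Its commutator (Weil) pairing, compared with the Weil pairing on $JC[3]$, should identify $H$ (respectively its class) canonically inside $JC[3]$: the isotropic/non-isotropic dichotomy of Definition~\ref{defniso} is mirrored exactly by the group structure of $K(\lambda)$ and by this pairing, so the cases $(\ZZ/3)^4$ and $(\ZZ/9)^2$ are handled in parallel. This completes the reconstruction of the pair $(C,H)$, and hence of $f$, from $P$, giving injectivity of both $\cP^{iso}$ and $\cP^{ni}$.
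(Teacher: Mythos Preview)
Your overall three-stage architecture is sensible, but the execution diverges from the paper and contains a genuine gap at the decisive step.

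For stage (i), your approach via the rational endomorphism algebra $\End(P)\otimes\QQ$ is only generic, as you yourself note; on loci where some of the $A_i$ become isogenous the algebra jumps and its idempotents no longer single out the correct surfaces. The paper sidesteps this entirely: Proposition~\ref{group} characterises the Galois group intrinsically as the subgroup $N\subset\Aut(P,\Theta_P)$ of odd-order polarised automorphisms fixing $K(\Theta_P)$ pointwise, and proves $N=\langle\sigma,\tau\rangle$ at \emph{every} point of moduli. This is what upgrades the result from generic to global injectivity.

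Stage (ii) is where your proposal actually fails. You aim to reconstruct the Jacobian of the base genus~$2$ curve as a ``polarised complement'' of the configuration of four surfaces, but that Jacobian is complementary to $P$ inside $J\tC$, not inside $P$; there is nothing in $P$ to take the complement of, and you supply no mechanism by which the finite gluing data would produce it. The paper's route is completely different and never reconstructs the base Jacobian at all. The structural ingredient you are missing is the hyperelliptic involution on the base: it lifts to nine involutions on $\tC$, and each small Prym $P_\alpha$ contains, by Ries's description, three distinguished \emph{elliptic curves} coming from quotients of $\tC$ by $S_3$-subgroups of the automorphism group. Propositions~\ref{elnoniso} and \ref{eliso} show these twelve elliptic curves are determined by the restricted polarisation alone (the isotropic case requiring an extra dualisation step). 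The paper then characterises the nine lifted involutions intrinsically as the ``group-compatible'' involutions of $P$ (Proposition~\ref{global}), assigns to each a compatible $4$-tuple of elliptic curves, and shows such a $4$-tuple generates inside $P$ the Jacobian of a genus~$4$ curve $C_{ab}=\tC/\langle j\sigma^a\tau^b\rangle$. Torelli recovers $C_{ab}$; the inclusion $E_i\hookrightarrow JC_{ab}$ yields a non-Galois triple cover $C_{ab}\to E_i$ whose Galois closure is $\tC$ together with its deck group, from which the unique $\ZZ_3\times\ZZ_3$ quotient is $f$. Your stage (iii) is thus absorbed into this construction and never needs to be carried out separately.
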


Since the proof is quite technical and has a few steps, let us start with an overview of the proof.
Let $[f: \tC \to H] \in \mathcal{R}_2^{\mathbb{Z}_3\times \mathbb{Z}_3}$ be a covering with $H = \tC/\langle \sigma, \tau \rangle$ and $(P(f),\Theta_{P(f)})$ be its polarised Prym variety. First, we show that the automorphisms of $P(f)$ induced by $\langle \sigma, \tau\rangle$ can be recovered from the polarisation $\Theta_{P(f)}$. Then the action of $\langle \sigma, \tau \rangle$ on the Prym variety induces an isotypical decomposition of $P(f)$ as a product of four Prym varieties of lower dimension. The induced polarisations  uniquely determine three elliptic curves inside each of the four Prym varieties. These twelve elliptic curves are, in fact, quotients of $\tC$ by the subgroups of $\Aut(\tC)$ isomorphic to $S_3$ and generated by an element in $\langle \sigma, \tau\rangle$ and a lift of the hyperelliptic involution $\iota$ on $H$ to $\tC$. 

Having distinguished elliptic curves, we use them to recover the set of involutions of the Prym variety $P(f)$ induced by the aforementioned lifts. Each such involution corresponds to a $4$-tuple of elliptic curves, on which it acts as the identity, and
every such $4$-tuple generates a Jacobian of a genus four curve, which is the quotient of $\tC$ by one of the nine lifts of $\iota_H$. Thus, we recover the coverings of elliptic curves by genus four curves. The coverings are non-Galois and their Galois closures give the coverings of elliptic curves by $\tC$, and determine the covering $f: \tC \to H$ uniquely. 

The approach above works in both isotropic and non-isotropic cases. However, the isotropic case requires more careful treatment due to the fact that the small Pryms are not embedded in the Prym variety $P(f)$. In particular, one has to pass to the quotient of each small Prym by a carefully chosen $3$-torsion point to conclude that elliptic curves are uniquely determined by the polarisation (see Proposition \ref{eliso}). 

It is important to note that the results of this paper cannot be obtained from known results. The methods of \cite{BO19} rely on an explicit description of hyperelliptic curves via Weierstrass points. The methods of \cite{sophie, naranjo2024simplicityjacobiansautomorphisms} use the genericity assumption in many ways, so one cannot hope for results like global injectivity that we have been able to obtain.
As an application of the first part, in the second part of the paper we focus on the general structure of the Prym map for abelian Galois coverings of genus \(2\). We show that the Prym map is generically finite apart from a few exceptions. The exact result is as follows. 
\begin{thm}[Theorem \ref{thmabel}]
    Let $G$ be a finite abelian group.  
    \begin{enumerate}
        \item If $G$ is isomorphic to either $\mathbb{Z}_2, \mathbb{Z}_3, \mathbb{Z}_4$ or $\mathbb{Z}_5$, then the generic fiber of the Prym map of unramified $G$-coverings of genus 2 curves $$\mathcal{P}_2^{G}: \mathcal{R}_2^{G} \to \mathcal{A}^{G}$$ is positive dimensional. 
        \item If $G$ is not isomorphic to a cyclic group of order less than 6, then the Prym map $$\mathcal{P}_2^{G}: \mathcal{R}_2^{G} \to \mathcal{A}^{G}$$ is generically finite. 
    \end{enumerate}
    Note that when the rank of $G$ (i.e. the minimal number of generators) is at least 5 then $\mathcal{R}_2^{G}$ is empty.
\end{thm}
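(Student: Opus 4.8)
The plan is to identify the codifferential of $\cP_2^{G}$ with a multiplication map and to read off its rank from a count of characters. An unramified $G$-covering $f:\tC\to C$ of a genus $2$ curve is the same datum as an injection $\widehat G\hookrightarrow JC$, giving $f_*\mathcal O_{\tC}=\bigoplus_{\chi\in\widehat G}L_\chi$ with $\chi\mapsto L_\chi$ landing in the $(\exp G)$-torsion of $JC$; since $\widehat G\cong G$ this already yields the closing remark, as $\mathrm{rank}\,G\le\dim_{\FF_p}JC[p]=4$ forces $\cR_2^{G}=\varnothing$ when $\mathrm{rank}\,G\ge 5$. One has $H^0(P(f),\Omega^1)=\bigoplus_{\chi\ne 0}H^0(\omega_C\otimes L_\chi)$, and the cotangent space to the moduli of $G$-equivariant polarised abelian varieties is the $G$-invariant part of $\mathrm{Sym}^2 H^0(P(f),\Omega^1)$; as in the classical Prym computation, the codifferential of $\cP_2^{G}$ is the restriction to this invariant part of the multiplication map
\[
\mu:\ \mathrm{Sym}^2 H^0(P(f),\Omega^1)\ \lra\ H^0(\omega_C^2).
\]
Because the covering data is discrete, $\dim\cR_2^{G}=\dim\cM_2=3=\dim H^0(\omega_C^2)$, so $\cP_2^{G}$ is generically finite precisely when $\mu$ is generically surjective, and has positive-dimensional generic fibres as soon as $\mathrm{rank}\,\mu<3$ identically.

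For $\chi\ne 0$ Riemann--Roch gives $h^0(\omega_C\otimes L_\chi)=1$; fixing a generator $s_\chi$, the $G$-invariance selects the tensors $s_\chi\otimes s_{\chi^{-1}}$, so $\mathrm{im}\,\mu$ is spanned by the products $s_\chi s_{\chi^{-1}}\in H^0(\omega_C^2)$, one for each unordered pair $\{\chi,\chi^{-1}\}$ with $\chi\ne 0$. Their number is $N_G=\tfrac12(|G|-1+t)$ with $t=|G[2]|-1$. For $G=\ZZ_2,\ZZ_3,\ZZ_4,\ZZ_5$ one finds $N_G=1,1,2,2$, so $\mathrm{rank}\,\mu\le 2<3$ at every point; the image of $\cP_2^{G}$ then has dimension at most $2$ and the generic fibre is positive-dimensional, which is part (1).

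For part (2) I would treat the cyclic groups $\ZZ_n$ with $n\ge 6$ by quoting \cite{Ago}, and reduce the non-cyclic case to small subgroups. If $\Sigma\le\widehat G$ is a subgroup, the products indexed by $\Sigma\setminus\{0\}$ form a subset of those indexed by $\widehat G\setminus\{0\}$, so $\mathrm{im}\,\mu_\Sigma\subseteq\mathrm{im}\,\mu_G$ and surjectivity for the sub-covering attached to $\Sigma$ implies it for $G$. A non-cyclic $G$ either contains an element of order $\ge 6$, reducing again to \cite{Ago}, or has prime-power exponent in $\{2,3,4,5\}$ and hence contains one of $\ZZ_2\times\ZZ_2$, $\ZZ_3\times\ZZ_3$, $\ZZ_5\times\ZZ_5$. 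Generic surjectivity of $\mu$ for $\ZZ_2\times\ZZ_2$ and $\ZZ_3\times\ZZ_3$ follows from the global injectivity of \cite{BO19} and of Theorem \ref{mainthm}, which force those Prym maps to be generically finite; the group $\ZZ_5\times\ZZ_5$, where $N_G=12$, is treated directly.

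The heart of the argument, and the main obstacle, is this surjectivity in the base cases: one must show not merely that there are at least three products but that three of them are linearly independent. Here I would use that for a genus $2$ curve the multiplication map $\mathrm{Sym}^2 H^0(\omega_C)\to H^0(\omega_C^2)$ is an isomorphism, so each $s_\chi s_{\chi^{-1}}$ is a binary quadratic form in a basis of $H^0(\omega_C)$; surjectivity then amounts to these forms not all lying in a pencil, a closed condition that I would rule out by computing on a single well-chosen covering (for instance a curve with extra automorphisms, where the $s_\chi$ are explicit) and then spreading out by semicontinuity of $\mathrm{rank}\,\mu$. The remaining bookkeeping is to ensure that every irreducible component of $\cR_2^{G}$ contains such an example, which reduces to extending a good $\ZZ_p\times\ZZ_p$-configuration of torsion points on a fixed generic $C$ to a full embedding $\widehat G\hookrightarrow JC[\exp G]$ meeting the prescribed component.
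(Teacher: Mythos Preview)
Your approach is genuinely different from the paper's. You work infinitesimally, identifying the codifferential of $\cP_2^G$ with the $G$-invariant multiplication map and counting the products $s_\chi s_{\chi^{-1}}$. The paper avoids differentials entirely: for part~(1) it simply cites \cite{Xiao}, \cite{anatoli}, and the trivial $\ZZ_2$ case, whereas your count $N_G\le 2$ gives a uniform, self-contained proof. For part~(2) the paper also reduces to a subgroup $B\in\{\ZZ_n\ (n\ge6),\ \ZZ_2^2,\ \ZZ_3^2,\ \ZZ_5^2\}$, but via a moduli-theoretic incidence variety $X\subset\cR^G\times\cR^B$ and its image $Y\subset\cA^G\times\cA^B$, showing $\dim Y=3$ by finiteness of abelian subvarieties of bounded type; your reduction via $\im\mu_B\subseteq\im\mu_G$ is more direct and stays at the level of a single curve.

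There is one real gap: the $\ZZ_5\times\ZZ_5$ base case. You say it is ``treated directly'' by finding one curve where three of the twelve products $s_\chi s_{\chi^{-1}}$ are independent, but you do not carry this out, and it is not obvious which test curve makes the computation tractable. The paper's Proposition~\ref{smallgroups} handles all $\ZZ_p\times\ZZ_p$ with $p$ odd by a clean geometric trick: it embeds $\tC\hookrightarrow C_{00}\times C_{10}$ via the two quotient maps $\pi_{00},\pi_{10}$, computes $\gamma(\tC)^2=12-2p^2<0$ by adjunction, concludes that $\gamma(\tC)$ is rigid and hence the map $(H,\eta,\zeta)\mapsto[C_{00}]$ has finite fibres, and then finishes by a countability argument on abelian subvarieties of $P$. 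This bypasses any explicit section computation. Your component bookkeeping in the last paragraph (extending a good $B$-datum to a $G$-datum on each component of $\cR_2^G$) is also left as a sketch; the paper faces the same issue and addresses it by working component by component in its diagram, using that $\im\cP^B$ is $3$-dimensional to force $\dim Y=3$.
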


\subsection*{Acknowledgements}
The results on a non-isotropic case of the second author has been supported by the Polish National Science Center project number 2019/35/D/ST1/02385. The results on an isotropic case of the first author has been supported by the Polish National Science Center project number 2024/54/E/ST1/00330.
The authors would like to thank Juan Carlos Naranjo for some stimulating discussions during his stay at the Jagiellonian University in Kraków. His stay was funded by the program Excellence Initiative –
Research University at the Jagiellonian University in Kraków. 
The authors would like to thank anonymous reviewers for their careful reading and comments that improved the paper.

\section{Preliminaries}
In this section, we introduce the notation and prove results about the Prym varieties of unramified $\mathbb{Z}_3 \times \mathbb{Z}_3$-coverings of genus 2 curves.

We would like to start by computing the number of irreducible components of the moduli of coverings. As stated in the introduction, we define the moduli space of $\ZZ_3\times\ZZ_3$-coverings as 
$$\mathcal{R}_2^{\mathbb{Z}_3\times \mathbb{Z}_3}=\{(H,\left<\eta,
\zeta\right>): H \text{ is a smooth genus 2 curve}, \eta,\zeta\in JH[3]\, \left<\eta,\zeta\right>\simeq\ZZ_3\times\ZZ_3\}$$
where $JH[3]$ is the set of 3-torsion points of the Jacobian $JH$, see \cite{Pardini}.

We also can consider the moduli of principally polarised abelian surfaces with level 3--structure, see \cite[Chap 8.3.2]{BL}, i.e. the space $$\cA_2[3]=\{(A,e_1,f_1,e_2,f_2): A\in\cA_2,\ e_1,f_1,e_2,f_2\in A[3] \text{ is a symplectic basis for } \omega\},$$
where $\omega$ is the symplectic form on the set of 3-torsion points $A[3]$, usually called the Weil form or the Commutator map (see \cite[Chap 6.3]{BL}).

Let $\cJ_2[3]\subset \cA_2[3]$ be an open dense subset of Jacobians of smooth genus 2 curves.
The first result of the paper is the following proposition.
\begin{prop}\label{moduli}
    The moduli space $\mathcal{R}_2^{\mathbb{Z}_3\times \mathbb{Z}_3}$ contains precisely two disjoint irreducible components, $(\mathcal{R}_2^{\mathbb{Z}_3\times \mathbb{Z}_3})^{iso}$ and $(\mathcal{R}_2^{\mathbb{Z}_3\times \mathbb{Z}_3})^{ni}$ called isotropic and non-isotropic $\mathbb{Z}_3\times \mathbb{Z}_3$ coverings.
\end{prop}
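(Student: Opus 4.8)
The plan is to separate the two components according to the isotropy type of the subgroup $V=\langle\eta,\zeta\rangle$ with respect to the Weil form $\omega$ on $JH[3]$, and to derive irreducibility of each type from the irreducibility of the full level-$3$ moduli space. First I would observe that $V\simeq\ZZ_3\times\ZZ_3$ is a $2$-dimensional $\FF_3$-subspace of the symplectic space $(JH[3],\omega)\simeq(\FF_3^4,\omega)$, and that the restriction $\omega|_V$ is an alternating form on a $2$-dimensional space, hence either identically zero or nondegenerate. A change of generators multiplies $\omega(\eta,\zeta)$ by the determinant of the base-change matrix in $GL_2(\FF_3)$, which is a nonzero scalar; thus the vanishing or non-vanishing of $\omega(\eta,\zeta)$ depends only on $V$ and not on the chosen generators. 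I then define $(\mathcal{R}_2^{\ZZ_3\times\ZZ_3})^{iso}$ (resp. $(\mathcal{R}_2^{\ZZ_3\times\ZZ_3})^{ni}$) as the locus where $\omega|_V=0$ (resp. $\omega|_V\neq 0$). Since every $2$-plane is of exactly one of these types, the two loci are disjoint and cover $\mathcal{R}_2^{\ZZ_3\times\ZZ_3}$; moreover the Weil form varies flatly in families, so the isotropy type is locally constant and each locus is both open and closed.

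For irreducibility I would use that $\cA_2[3]$ is irreducible (see \cite[Chap 8.3.2]{BL}), so that its open dense Jacobian subset $\cJ_2[3]$ is irreducible as well. I would then construct comparison morphisms from $\cJ_2[3]$ onto each locus: to a point $(H,e_1,f_1,e_2,f_2)$ assign $(H,\langle e_1,e_2\rangle)$ in the isotropic case and $(H,\langle e_1,f_1\rangle)$ in the non-isotropic case; these are well defined because $\omega(e_1,e_2)=0$ whereas $\omega(e_1,f_1)=1$ for a symplectic basis, so the first plane is totally isotropic and the second is nondegenerate. Surjectivity onto the corresponding locus is precisely Witt's theorem: $\mathrm{Sp}_4(\FF_3)$ acts transitively on the set of totally isotropic $2$-planes and on the set of nondegenerate $2$-planes, so any $(H,V)$ of a given isotropy type is obtained by choosing a symplectic basis of $JH[3]$ carrying the fixed model plane to $V$. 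Being the image of an irreducible space under a morphism, each locus is irreducible, and together with disjointness and the open-and-closed property this shows that $(\mathcal{R}_2^{\ZZ_3\times\ZZ_3})^{iso}$ and $(\mathcal{R}_2^{\ZZ_3\times\ZZ_3})^{ni}$ are exactly the two irreducible components.

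The step I expect to require the most care is the surjectivity of the two comparison morphisms and, underlying it, the fact that the monodromy of the cover $\cJ_2[3]\to\cJ_2$ is the full group $\mathrm{Sp}_4(\FF_3)$; this is what guarantees, via Witt's theorem, that each $\mathrm{Sp}_4(\FF_3)$-orbit of $2$-planes produces a connected cover rather than splitting further. By contrast, the purely symplectic linear-algebra facts—the classification of alternating forms on a plane and the orbit structure of Lagrangian versus symplectic $2$-planes—are routine and serve only to confirm that there are exactly two orbits, hence exactly two components.
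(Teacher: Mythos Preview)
Your proposal is correct and follows essentially the same approach as the paper: both define the two loci by the isotropy type of $V=\langle\eta,\zeta\rangle$ and then exhibit each as the image of the irreducible space $\cJ_2[3]$ under a forgetful map $(H,e_1,f_1,e_2,f_2)\mapsto(H,\langle e_1,e_2\rangle)$, respectively $(H,\langle e_1,f_1\rangle)$. The only cosmetic difference is that the paper verifies surjectivity by an explicit elementary extension of $\{\eta,\zeta\}$ to a symplectic basis of $JH[3]$, whereas you invoke Witt's theorem for the same purpose; your closing remark about full monodromy is already subsumed in the irreducibility of $\cA_2[3]$ that both you and the paper cite, so no additional argument is needed there.
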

\begin{proof}
Let $(H,\left<\eta,\zeta\right>) \in \mathcal{R}_2^{\mathbb{Z}_3\times \mathbb{Z}_3}$ be a covering. Consider $(JH[3], \omega)$ that is a symplectic vector space over the field of 3 elements. If $\omega(\eta,\zeta)\neq0$ then $e_1 :=\eta, f_1 :=\omega(\eta,\zeta)^{-1}\zeta$ can be extended to a symplectic basis $e_1,e_2,f_1,f_2$. If $\omega(\eta,\zeta)=0$ then $\left<\eta,\zeta\right>$ is a maximal isotropic subgroup. We can take $e_1 :=\eta, e_2 :=\zeta$ and then find $f_1\in e_2^{\perp}\setminus\left<e_1,e_2\right>$ such that $\omega(e_1,f_1)=1$ and $f_2\in \left<e_1,f_1\right>^{\perp}$ such that $\omega(e_2,f_2)=1$. Therefore, the set $\{\eta, \zeta\}$ can always be extended to a symplectic basis of $(JH[3], \omega)$.

    Consider the following two forgetful maps
    \begin{align*}
    \pi_{iso}&:\cJ_2[3]\to (\mathcal{R}_2^{\mathbb{Z}_3\times \mathbb{Z}_3})^{iso}, \ &\pi_{iso}(JH,e_1,e_2,f_1,f_2)=(H,\left<e_1,e_2\right>)\\
    \pi_{ni}&:\cJ_2[3]\to (\mathcal{R}_2^{\mathbb{Z}_3\times \mathbb{Z}_3})^{ni}, \ &\pi_{ni}(JH,e_1,e_2,f_1,f_2)=(H,\left<e_1,f_1\right>)    
    \end{align*}
        By what we have written before, the maps are well-defined and surjective. Since the domain is irreducible, both spaces are irreducible.
\end{proof}

Now, we describe the geometry of the coverings. We start with the 'from top-down' perspective. 
Let $f: \tC \to H$ be an unramified Galois covering of a smooth genus 2 curve $H$ with a Galois group $G = \mathbb{Z}_3 \times \mathbb{Z}_3 = \langle \sigma, \tau \rangle \subset \Aut(\tC)$. The curve $\tC$ has genus 10 and there are four genus 4 curves which are quotients of $\tC$ by subgroups of $G$ isomorphic to $\mathbb{Z}_3$. We denote the quotient curve $\tC/\left<\alpha\right>$ by $C_\alpha$ and the quotient maps $\tC \to C_{\alpha}$ and $C_{\alpha} \to H$ by $k_\alpha$ and $h_{\alpha}$, respectively. We have the following diagram:

\begin{figure}[h!]
\begin{tikzcd}
                                       &                                  & \tC \arrow[dddd, "f"] \arrow[ldd, "k_{\tau}"] \arrow[lldd, "k_{\sigma}"'] \arrow[rdd, "k_{\sigma\tau}"'] \arrow[rrdd, "k_{\sigma^2\tau}"] &                                               &                                                   \\
                                       &                                  &                                                                                                                                           &                                               &                                                   \\
C_{\sigma} \arrow[rrdd, "h_{\sigma}"'] & C_{\tau} \arrow[rdd, "h_{\tau}"] &                                                                                                                                           & C_{\sigma\tau} \arrow[ldd, "h_{\sigma\tau}"'] & C_{\sigma^2\tau} \arrow[lldd, "h_{\sigma^2\tau}"] \\
                                       &                                  &                                                                                                                                           &                                               &                                                   \\
                                       &                                  & H                                                                                                                                         &                                               &                                                  
\end{tikzcd}
\caption{}
\end{figure}

It follows from \cite{ries} that the hyperelliptic involution $\iota \in \Aut(H)$ lifts to an involution $j \in \Aut(\Tilde{C})$ so that $\sigma, \tau, j$ generate the subgroup $F \subseteq \Aut(\Tilde{C})$, isomorphic to SmallGroup(18,4) in the GAP Small Groups Library (see \cite{GAP}), and given by $$F = \langle \sigma, \tau, j \: | \: \sigma^3 = \tau^3 = j^2 = (j\sigma)^2 = (j\tau)^2 = 1, \sigma\tau = \tau\sigma \rangle.$$ 

\begin{lem}
\label{conj}
All involutions in $F$ are conjugate.
\end{lem}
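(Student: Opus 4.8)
The plan is to enumerate all involutions of the group
$$F = \langle \sigma, \tau, j \mid \sigma^3 = \tau^3 = j^2 = (j\sigma)^2 = (j\tau)^2 = 1,\ \sigma\tau = \tau\sigma \rangle$$
explicitly and then show they form a single conjugacy class. Since $F$ has order 18, with a normal abelian subgroup $N = \langle \sigma, \tau \rangle \cong \mathbb{Z}_3 \times \mathbb{Z}_3$ of index 2, the relations tell us that $j$ acts on $N$ by inversion: from $(j\sigma)^2 = 1$ we get $j\sigma j^{-1} = \sigma^{-1}$, and similarly $j\tau j^{-1} = \tau^{-1}$. This means $F$ is the semidirect product $N \rtimes \langle j \rangle$ where $j$ inverts every element of $N$. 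The key consequence is that for any $n \in N$, the element $jn$ satisfies $(jn)^2 = jnjn = (jnj^{-1})\,n = n^{-1}n = 1$, so every element of the nontrivial coset $jN$ is an involution.

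**First I would** establish that the involutions of $F$ are precisely the nine elements of the coset $jN = \{\,j\sigma^a\tau^b : 0 \le a,b \le 2\,\}$, together with no elements of $N$ itself. Indeed, since $N \cong \mathbb{Z}_3 \times \mathbb{Z}_3$ has odd order, it contains no involution, so every involution lies in $jN$; and by the computation above, all nine elements of $jN$ are genuinely of order 2. Thus there are exactly nine involutions, corresponding to the nine lifts of the hyperelliptic involution $\iota$ mentioned in the text.

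**The main step** is to verify that conjugation by $N$ acts transitively on these nine involutions. For $n = \sigma^c\tau^d \in N$, I compute
$$
n \, (j\sigma^a\tau^b) \, n^{-1} = nj\, \sigma^a\tau^b n^{-1} = j\,n^{-1}\sigma^a\tau^b n^{-1} = j\,\sigma^{a-2c}\tau^{b-2d},
$$
using $nj = jn^{-1}$ (equivalently $jn = n^{-1}j$), which follows from $j$ inverting $N$. Since $-2 \equiv 1 \pmod 3$, this equals $j\,\sigma^{a+c}\tau^{b+d}$, so conjugating $j\sigma^a\tau^b$ by $\sigma^c\tau^d$ yields $j\sigma^{a+c}\tau^{b+d}$. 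As $(c,d)$ ranges over $(\mathbb{Z}/3)^2$, the pair $(a+c, b+d)$ ranges over all of $(\mathbb{Z}/3)^2$, so the conjugacy class of $j$ already contains all nine involutions. **I expect no real obstacle here:** the only point requiring care is the sign bookkeeping in the exponents modulo 3, and the fact that the inversion action makes the $N$-conjugation orbit exhaust the full coset. Hence all involutions in $F$ are conjugate, as claimed.
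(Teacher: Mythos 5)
Your proof is correct and follows essentially the same route as the paper: both arguments identify the involutions as the coset $jN$ and show that conjugation by elements of $N=\langle\sigma,\tau\rangle$ acts transitively on that coset, using that $j$ inverts $N$ and that squaring is a bijection on the odd-order group $N$ (your $-2\equiv 1 \pmod 3$ is the paper's choice of $n=2$ in $jg^{2n}=g^{-n}jg^{n}$). No gaps.
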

\begin{proof}

    It is enough to show that all the involutions in $F$ are conjugate to $j$. Note that all elements of $F$ can be written in the form $j^c\sigma^a\tau^b$ for $0 \leq c \leq 1$ and $0 \leq a,b \leq 2$ and $j^c\sigma^a\tau^b$ is of order two if and only if $c = 1$. Let $j\sigma^a\tau^b = jg$ be any involution in $F$. Since $jg^njg^n = 1$ for any positive $n$, we get $jg^{2n} = g^{-n}jg^{n}$. Taking $n= 2$ we see that $jg$ and $j$ are conjugate.
    
\end{proof}

For the simplicity of exposition, we will now focus on the curve $C_\s$ only. The results concerning other quotients follow from the relabelling of order 3 automorphisms in $F$. 

The involutions $j, j\sigma, j\sigma^2\in \Aut(\tC)$ induce an involution on $C_{\sigma}$, which, for simplicity, we also denote by $j$. Note that this involution is also one of the lifts of the hyperelliptic involution $\iota\in \Aut(H)$ by $h_{\sigma}$. 

For $0 \leq a,b \leq 2$, let $C_{ab} := \Tilde{C}/j\sigma^a\tau^b$ and $\pi_{ab}: \Tilde{C} \to C_{ab}$ be the double covering. Lemma \ref{conj} implies that all curves $C_{ab}$ are pairwise isomorphic. Moreover, since \(f\) is unramified, it follows that  each $\pi_{ab}$ is ramified at 6 points, one in each preimage by $f$ of the Weierstrass points of $H$. Then, the Riemann-Hurwitz formula implies that $g(C_{ab}) = 4$ for each $0 \leq a,b \leq 2$. Note that there are 12 subgroups of $F$ isomorphic to $S_{3}$. The quotients of $\tC$ by each of these subgroups are elliptic curves and with notation $E_{\sigma, j\tau} = \tC /\langle \sigma, j\tau \rangle$ we have Diagram \ref{bigdiag}, where, for brevity, the complete information about the coverings is given only for $C_{\sigma}$ and $C_{00}$:

\begin{figure}
\begin{tikzcd}                                                                                                       &                                    &                                                  & \tC \arrow[rdd, "\pi_{00}"'] \arrow[rrrdd, "\pi_{22}"] \arrow[ldd, "k_{\tau}"] \arrow[llldd, "k_{\sigma}"'] &                                                                                                                                                                         &                                    &                                           \\
                                                                                                       &                                    &                                                  &                                                                                                             &                                                                                                                                                                         &                                    &                                           \\
C_{\sigma} \arrow[ddd, "h_{\sigma}"', bend right] \arrow[rddd] \arrow[rrddd] \arrow[rrrddd, bend left] & ...                                & C_{\tau} \arrow[rrddd] \arrow[llddd, "h_{\tau}"] &                                                                                                             & C_{00} \arrow[ddd, "l^{00}_{\tau}"'] \arrow[lddd, "l^{00}_{\sigma}", bend right] \arrow[rrddd, "l_{\sigma\tau}^{00}"', bend left] \arrow[rddd, "l^{00}_{\sigma^2\tau}"] & ...                                & C_{22} \arrow[ddd, "l^{22}_{\sigma\tau}"] \\
                                                                                                       &                                    &                                                  &                                                                                                             &                                                                                                                                                                         &                                    &                                           \\
                                                                                                       &                                    &                                                  &                                                                                                             &                                                                                                                                                                         &                                    &                                           \\
H \arrow[rrrdd]                                                                                        & {E_{\sigma, j\tau^2}} \arrow[rrdd] & {E_{\sigma, j\tau}} \arrow[rdd]                  & {E_{\sigma, j}} \arrow[dd]                                                                                  & {E_{\tau, j}} \arrow[ldd]                                                                                                                                               & {E_{\sigma^2\tau, j}} \arrow[lldd] & {E_{\sigma\tau, j}} \arrow[llldd]         \\
                                                                                                       &                                    &                                                  &                                                                                                             &                                                                                                                                                                         &                                    &                                           \\
                                                                                                       &                                    &                                                  & \mathbb{P}^1                                                                                                &                                                                                                                                                                         &                                    &                                          
\end{tikzcd}
\caption{}
\label{bigdiag}
\end{figure}

From Diagram \ref{bigdiag} we see that $C_{\sigma}$ is a double covering of three elliptic curves, one of which is covered by $C_{00}$ but none is covered by $C_{\tau}, C_{\sigma\tau}, C_{\sigma^2\tau}$. The curve $C_{00}$ is a (non-Galois) triple covering of four elliptic curves, and for any other curve $C_{ab}$ there is a unique elliptic curve covered by both $C_{00}$ and $C_{ab}$. We denote the covering $C_{00} \to E_{\s, j}$ by $l_{\s}^{00}$ and analogously for other such coverings. 

Recall that the Prym variety of the covering $f: \tC \to H$ is defined by $$P = P(f) := \ker(\Nm_f)^0.$$ In the rest of the section we will focus on the geometry of the Prym variety $P$. Note that $P$ comes with the restricted polarisation from $J\tC$ that we will call $\Theta_P$. 

\begin{lem}
    The pullback of the Jacobian $JC_{ab}$ via $\pi_{ab}$ lies in $P$ for any $a,b\in\{0,1,2\}$.
\end{lem}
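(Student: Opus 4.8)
The plan is to show directly that the composite $\Nm_f \circ \pi_{ab}^* \colon JC_{ab} \to JH$ is the zero homomorphism. Since $\pi_{ab}^*(JC_{ab})$ is the image of a homomorphism of abelian varieties, it is a connected subgroup containing $0$; hence, once it is contained in $\ker(\Nm_f)$, it automatically lies in the identity component $P = \ker(\Nm_f)^0$. So the whole task reduces to computing $\Nm_f \circ \pi_{ab}^*$ on divisor classes.

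First I would evaluate the composite on a point class. Write $g = j\sigma^a\tau^b$, so that $\pi_{ab}$ is the quotient by $\langle g\rangle$. For a point $q \in C_{ab}$ that is not a branch point of $\pi_{ab}$, choose $x \in \tC$ with $\pi_{ab}(x) = q$; then $\pi_{ab}^* q = x + gx$, and pushing forward by $f$ gives $\Nm_f \pi_{ab}^* q = f(x) + f(gx)$. The key step is to simplify $f(gx)$ using the structure of $F$: since $\sigma,\tau$ lie in the Galois group of $f$ we have $f\circ\sigma = f\circ\tau = f$, and since $j$ is a lift of the hyperelliptic involution we have $f\circ j = \iota\circ f$. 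Hence $f(gx) = f(j\sigma^a\tau^b x) = \iota\big(f(\sigma^a\tau^b x)\big) = \iota(f(x))$, so writing $p := f(x)$ we obtain $\Nm_f\pi_{ab}^* q = p + \iota(p)$. On the hyperelliptic curve $H$ the divisor $p + \iota(p)$ is linearly equivalent to the canonical (hyperelliptic) class $K_H$ for \emph{every} $p$; the same conclusion holds at the branch points of $\pi_{ab}$, where $\pi_{ab}^* q = 2x$ with $x$ a fixed point of $g$, since then $f(x)$ is a Weierstrass point $w$ and $2w \sim K_H$.

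Finally, for any degree-zero divisor $\sum n_i q_i$ on $C_{ab}$ this yields $\Nm_f\pi_{ab}^*\big(\sum n_i q_i\big) = \sum n_i\,(p_i + \iota(p_i)) \sim \big(\sum n_i\big)K_H = 0$ in $\Pic^0(H)$, so $\Nm_f\pi_{ab}^* = 0$ and the lemma follows. The argument involves no serious obstacle beyond bookkeeping the ramification of $\pi_{ab}$ correctly; its genuine content is the identity $f\circ j = \iota\circ f$ together with $f\circ\sigma = f\circ\tau = f$, which collapses the nine-sheeted pushforward onto the pair $\{p,\iota(p)\}$. This is precisely where the fact that $j$ is a lift of $\iota$ enters, and it is the heart of the computation.
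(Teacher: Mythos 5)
Your proof is correct, but it takes a more elementary, divisor-theoretic route than the paper. The paper argues entirely in $\End(J\tC)$: it notes $\pi_{ab}^*(JC_{ab})=\im(1+j\sigma^a\tau^b)$ and checks the group-algebra identity $\bigl(\sum_{a,b}\sigma^a\tau^b\bigr)(1+j\sigma^a\tau^b)=\sum_{h\in F}h=0$, so the image is killed by $f^*\circ\Nm_f$ and, being connected, lies in $P$. You instead compute $\Nm_f\circ\pi_{ab}^*$ directly on divisors, using $f\circ\sigma=f\circ\tau=f$ and $f\circ j=\iota\circ f$ to reduce the fiber pushforward to $p+\iota(p)\sim K_H$, whose class is independent of $p$, so that degree-zero divisors map to $0$. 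The two arguments encode the same fact --- the paper's vanishing $\sum_{h\in F}h=0$ on $J\tC$ holds precisely because $\tC/F\cong\PP^1$, which is what your constancy of the class $p+\iota(p)$ expresses --- but your version makes that geometric input explicit rather than leaving it as an unexplained identity, at the cost of some bookkeeping (the branch-point case, which you could in fact skip by choosing divisor representatives supported away from the finitely many branch points). Both correctly use connectedness of the image to pass from $\ker(\Nm_f)$ to its identity component $P$.
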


\begin{proof}
    Since the covering $\pi_{ab}$ is of degree 2 and ramified, $\pi_{ab}^*: JC_{ab} \to J\Tilde{C}$ is injective \cite[Proposition 11.4.3]{BL} and $\pi_{ab}^*(JC_{ab}) = \im (1 + j\sigma^a\tau^b) \subset J\Tilde{C}$.
    Since the Jacobians are connected, it is enough to show that $ \im (1 + j\sigma^a\tau^b) \subset \ker(\sum_{0 \leq a,b \leq 2} \sigma^a\tau^b)$. But this follows from
    $$\left(\sum_{0 \leq a,b \leq 2} \sigma^a\tau^b\right)(1 + j\sigma^a\tau^b) = \sum_{\substack{0 \leq c \leq 1,\\0 \leq a,b \leq 2}}j^c\sigma^a\tau^b = 0.$$
\end{proof}

Up to this point, we have been looking at the construction from the 'top-down' perspective. Now, we would like to look from the bottom-up one. Recall that the construction is obtained from $(H,\left<\eta,\zeta\right>)$, where $\eta,\zeta\in JH[3]$. From now on, we will assume that the covering $h_{\s}$, that is, the quotient map of $C_{\s}$ by $\langle \tau \rangle$, is given by $\eta\in JH[3]$ and the covering $k_{\s}$ is given by $h_{\s}^*(\zeta) \in JC_{\s}[3]$ with $\zeta \in JH[3]$. Then $\langle \eta, \zeta \rangle$ corresponds to $f$ and $\ker f^* = \langle \eta, \zeta \rangle \subset JH[3]$ is isomorphic to $\mathbb{Z}_3 \times \mathbb{Z}_3$. The following definition follows directly from Proposition \ref{moduli}.
\begin{defn}\label{defniso}
A $\ZZ_3\times\ZZ_3$ covering $f:\tC\to H$ given by $\langle \eta, \zeta \rangle \subset JH[3]$ is called \it{isotropic} if $e^{3\Theta_H}(\eta, \zeta) = 1$ and \it{non-isotropic} if $e^{3\Theta_H}(\eta, \zeta) \neq 1$, where $e^{3\Theta_H}$ is the usual Weil pairing (Commutator map) on $JH[3]$, see \cite[Chap 6.3]{BL}.
\end{defn}

\begin{lem}
Let $P$ be the Prym variety of $f:\tC\to H$ with the restricted polarisation called $\Theta_P$.
\label{polariz}
\begin{itemize}
    \item[i)] If the covering $f$ is isotropic, then $\Theta_P$ has type $(1,1,\ldots, 1, 3,3)$;
    \item[ii)] If the covering $f$ is non-isotropic, then $\Theta_P$ has type $(1,1,\ldots, 1, 9)$.
\end{itemize}
    
\end{lem}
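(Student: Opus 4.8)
The plan is to realise $P$ as the complementary abelian subvariety of $Y:=f^*(JH)$ inside the principally polarised abelian variety $(J\tC,\Theta_{\tC})$ and to transport the polarisation type from $Y$ to $P$. Since $\dim J\tC=10$ and $\dim JH=2$, we have $\dim Y=2$ and $\dim P=8$. The subvarieties $Y=\im(f^*)$ and $P=\ker(\Nm_f)^0$ are complementary: the endomorphism $\varepsilon:=f^*\circ\Nm_f$ satisfies $\varepsilon|_Y=[9]$ (because $\Nm_f\circ f^*=[\deg f]=[9]$), while $\im\varepsilon=Y$ and $\ker(\varepsilon)^0=\ker(\Nm_f)^0=P$ as $f^*$ has finite kernel. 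For complementary subvarieties of a principally polarised abelian variety the kernels of the induced polarisations are isomorphic, $K(\Theta_{\tC}|_Y)\cong K(\Theta_{\tC}|_P)$ (see \cite[Ch.~12]{BL}); in particular $\Theta_{\tC}|_Y$ and $\Theta_P$ have the same nontrivial elementary divisors. As $\dim P=8$, it therefore suffices to compute the type $(d_1,d_2)$ of $\Theta_{\tC}|_Y$ on the surface $Y$ and then pad with $1$'s to obtain type $(1,\dots,1,d_1,d_2)$ on $P$.

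To compute $\Theta_{\tC}|_Y$, note that $\ker(f^*)=\langle\eta,\zeta\rangle=:K\cong\ZZ_3\times\ZZ_3$, so $f^*$ factors as $JH\xrightarrow{p}JH/K\xrightarrow{\sim}Y$. Using that $f^*$ and $\Nm_f$ are dual with respect to the canonical principal polarisations (so $\widehat{f^*}=\phi_{\Theta_H}\circ\Nm_f\circ\phi_{\Theta_{\tC}}^{-1}$) together with $\Nm_f\circ f^*=[9]$, one computes $\phi_{(f^*)^*\Theta_{\tC}}=\phi_{\Theta_H}\circ\Nm_f\circ f^*=[9]\circ\phi_{\Theta_H}$, that is $(f^*)^*\Theta_{\tC}\equiv 9\Theta_H$. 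Hence $p^*(\Theta_{\tC}|_Y)=9\Theta_H$, so $\Theta_{\tC}|_Y$ is obtained by descending $9\Theta_H$ along $p$, and by the standard descent formula $K(\Theta_{\tC}|_Y)\cong K^{\perp}/K$, where $K^{\perp}$ is the orthogonal complement of $K\subset JH[9]=K(9\Theta_H)$ with respect to the Weil form $e^{9\Theta_H}$.

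It then remains to identify $K^{\perp}/K$ in the two cases. The key compatibility is $e^{9\Theta_H}|_{JH[3]}=(e^{3\Theta_H})^{3}\equiv1$ (as $e^{3\Theta_H}$ is $\mu_3$-valued), which shows $JH[3]=3\cdot JH[9]$ is maximal isotropic in $(JH[9],e^{9\Theta_H})$ and explains why $9\Theta_H$ always descends. Picking a symplectic basis $a_1,a_2,b_1,b_2$ of $(JH[9],e^{9\Theta_H})$ makes $3a_1,3a_2,3b_1,3b_2$ a symplectic basis of $(JH[3],e^{3\Theta_H})$. In the non-isotropic case $e^{3\Theta_H}(\eta,\zeta)\neq1$, so after a change of basis $K=\langle 3a_1,3b_1\rangle$; a direct computation of the orthogonal complement yields $K^{\perp}/K\cong(\ZZ_9)^2$, the kernel of a type $(1,9)$ polarisation, proving (ii). In the isotropic case $e^{3\Theta_H}(\eta,\zeta)=1$, so $K$ is maximal isotropic in $JH[3]$ and after a change of basis $K=\langle 3a_1,3a_2\rangle$, giving $K^{\perp}/K\cong(\ZZ_3)^4$, the kernel of a type $(3,3)$ polarisation, proving (i). (In the isotropic case one can also argue directly: maximal isotropy means $3\Theta_H$ descends to a principal polarisation $\Xi$ on $Y$, whence $\Theta_{\tC}|_Y=3\Xi$ has type $(3,3)$.)

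The main obstacle is the careful bookkeeping between the two Weil forms $e^{3\Theta_H}$ and $e^{9\Theta_H}$: one must establish the compatibility $e^{9\Theta_H}|_{JH[3]}=(e^{3\Theta_H})^3$ that forces $9\Theta_H$ (but, in the non-isotropic case, not $3\Theta_H$) to descend, and then compute $K^{\perp}/K$ precisely enough to distinguish the group $(\ZZ_9)^2$ from $(\ZZ_3)^4$, since these are what separate the types $(1,9)$ and $(3,3)$. By contrast, the duality $\widehat{f^*}=\phi_{\Theta_H}\circ\Nm_f\circ\phi_{\Theta_{\tC}}^{-1}$ and the complementarity isomorphism $K(\Theta_{\tC}|_Y)\cong K(\Theta_{\tC}|_P)$ are standard and may be quoted from \cite{BL}.
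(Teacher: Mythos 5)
Your argument is correct and ultimately reduces to the same computation as the paper's proof: the paper cites \cite[Proposition 3.2.9]{LangeRodr} for the isomorphism $K(\Theta_P)\simeq(\ker f^*)^{\perp}/\ker f^*$ (with $\perp$ taken in $JH[9]$ for $e^{9\Theta_H}$) and then performs exactly the symplectic-basis calculation you carry out in your last two paragraphs. The only difference is that you derive that key isomorphism from scratch (via complementarity of $f^*JH$ and $P$ in $J\tC$, the identity $(f^*)^*\Theta_{\tC}\equiv 9\Theta_H$, and descent along $JH\to JH/\langle\eta,\zeta\rangle$) rather than quoting it, which is a correct and self-contained substitute for the citation.
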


\begin{proof}
    By \cite[Proposition 3.2.9]{LangeRodr} we have $$K(\Theta_P) \simeq (\ker f^*)^{\bot}/ \ker f^*.$$
    Let $e_1, f_1, e_2, f_2$ be  symplectic generators of $JH[9]$ (with $e^{9\Theta_H}(e_i, f_i) = e^{\frac{2\pi i }{9}}$ for $i = 1,2$). By a possible change of coordinates, we can assume that in the isotropic case we have $\ker f^* = \langle 3e_1, 3e_2 \rangle$. Then $(\ker f^*)^{\bot} = \langle e_1, 3f_1, e_2, 3f_2 \rangle$. Therefore, $K(\Theta_P) \simeq\mathbb{Z}_3^2 \times \mathbb{Z}_3^2 $, hence the type is $(1,1,\ldots, 1,3,3)$. 
    
    In the non-isotropic case we can assume that $\ker f^* = \langle 3e_1, 3f_1 \rangle$, and hence $(\ker f^*)^{\bot} = \langle 3e_1, 3f_1, e_2, f_2\rangle$. Thus, $K(\Theta_P) \simeq \mathbb{Z}_{9}^2$, and the type is $(1,1,\ldots, 1,9)$.
\end{proof}

Before determining the isotypical decomposition of the Jacobian $J\tC$ we introduce the following notation. Let $A$ be an abelian variety and $A_1, A_2 \ldots, A_n \subseteq A$ be a set of its abelian subvarieties with the induced polarisations. We denote the symmetric idempotent of $A_k$ by $\varepsilon_{k} \in \End_{\mathbb{Q}}(A)$ for $1 \leq k \leq n$. We write $$A = A_1 \boxplus A_2 \boxplus \ldots \boxplus A_n$$ if $$\sum_{i=1}^n \varepsilon_i = 1.$$ 

\begin{prop}
\label{rozklad}
    We have the following decompositions:
    $$J\tC = f^*JH \boxplus k_{\sigma}^*P(h_{\sigma}) \boxplus k_{\tau}^*P(h_{\tau}) \boxplus k_{\sigma\tau}^*P(h_{\sigma\tau}) \boxplus k_{\sigma^2\tau}^*P(h_{\sigma^2\tau}).$$
    With the notation from diagram \ref{bigdiag} we have
    $$JC_{00} = (l_{\sigma}^{00})^*E_{\sigma, j} \boxplus (l_{\tau}^{00})^*E_{\tau, j} \boxplus (l_{\sigma\tau}^{00})^*E_{\sigma\tau, j} \boxplus (l_{\sigma^2\tau}^{00})^*E_{\sigma^2\tau, j}.$$
\end{prop}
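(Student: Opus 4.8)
The plan is to prove both displays by exhibiting, for each summand, its symmetric idempotent in the rational endomorphism algebra and checking that these idempotents sum to $1$; by the definition of $\boxplus$ this is exactly what is required, and since symmetric idempotents (those fixed by the Rosati involution) that sum to the identity are automatically pairwise orthogonal, by a Pythagoras argument for the positive Rosati form, nothing further must be checked. Throughout I use that elements of $\Aut(\tC)$ preserve the principal polarisation, so the Rosati involution on $\End_\QQ(J\tC)$ sends $g\mapsto g^{-1}$; hence any rational average of a subgroup is a symmetric idempotent. I also use the two standard facts $f^*\Nm_f=\sum_{g\in G}g$ and injectivity of the pullback for a ramified double covering \cite[Proposition 11.4.3]{BL}.

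For the first decomposition, set $e_0=\frac19\sum_{g\in G}g$ and, for $\alpha\in\{\sigma,\tau,\sigma\tau,\sigma^2\tau\}$, put $p_\alpha=\frac13(1+\alpha+\alpha^2)$ and $q_\alpha=p_\alpha-e_0$. From $e_0^2=e_0$, $p_\alpha^2=p_\alpha$ and $p_\alpha e_0=e_0$ one checks $q_\alpha$ is idempotent, and all three are symmetric. Since $e_0=\frac19 f^*\Nm_f$ we get $\im e_0=f^*JH$, and since $p_\alpha=\frac13 k_\alpha^*\Nm_{k_\alpha}$ we get $\im p_\alpha=k_\alpha^*JC_\alpha$. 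Choosing $\beta$ with $G=\langle\alpha,\beta\rangle$ so that the induced $\bar\beta$ generates $\Gal(C_\alpha/H)$, the relation $\beta k_\alpha^*=k_\alpha^*\bar\beta$ together with the standard Prym decomposition $JC_\alpha=h_\alpha^*JH\boxplus P(h_\alpha)$, whose Prym idempotent is $\frac13(2-\bar\beta-\bar\beta^2)$, yields $q_\alpha=\frac13(2-\beta-\beta^2)p_\alpha$ and hence $\im q_\alpha=k_\alpha^*P(h_\alpha)$. It then remains to verify $e_0+\sum_\alpha q_\alpha=1$, i.e.\ $\sum_\alpha p_\alpha=1+3e_0$; this holds because the four order-$3$ subgroups of $G$ partition its eight nontrivial elements, so in $\sum_\alpha\sum_{h\in\langle\alpha\rangle}h$ the identity occurs four times and every other element once.

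For the second decomposition the difficulty is that no nontrivial element of $F$ descends to $C_{00}=\tC/\langle j\rangle$, because the centraliser of $j$ in $F$ is $\langle j\rangle$, so there is no group action on $C_{00}$ to exploit directly. Instead I would work inside $J\tC$: the pullback $\pi_{00}^*$ is injective and identifies $JC_{00}$ with $V=\im(1+j)$ and $\End_\QQ(JC_{00})$ with the corner algebra cut out by the symmetric idempotent $\frac12(1+j)$, which plays the role of the identity of $\End_\QQ(JC_{00})$. For each $\alpha$ put $s_\alpha=\frac16\sum_{g\in\langle\alpha,j\rangle}g$, the $S_3$-averaging idempotent; it is symmetric, satisfies $\frac12(1+j)s_\alpha=s_\alpha=s_\alpha\frac12(1+j)$, so it lies in the corner, and, as $\tC\to C_{00}\to E_{\alpha,j}$ is precisely the quotient by $\langle\alpha,j\rangle$, its image is $\pi_{00}^*(l^{00}_\alpha)^*E_{\alpha,j}$. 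Everything thus reduces to the single identity $\sum_\alpha s_\alpha=\frac12(1+j)$.

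The crux is this last identity, and I expect the cancellation it hides, rather than any idempotent bookkeeping, to be the one genuinely subtle point. Counting the multiplicity with which each element of $F$ occurs in $\sum_\alpha\sum_{g\in\langle\alpha,j\rangle}g$, the four copies of $S_3$ meet only in $\{1,j\}$, while each nontrivial element of $G$ and each of the eight remaining involutions appears once; this gives $\sum_\alpha s_\alpha=\frac12(1+j)+\frac32(1+j)e_0$. I therefore must show the correction term vanishes, and here the geometry enters: $j$ lifts the hyperelliptic involution $\iota$ of the genus $2$ curve $H$, which acts as $-1$ on $JH$, so $f\circ j=\iota\circ f$ gives $j^*f^*=f^*\iota^*=-f^*$, meaning $j$ acts as $-1$ on $f^*JH=\im e_0$. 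Consequently $(1+j)e_0=0$, the correction term dies, $\sum_\alpha s_\alpha=\frac12(1+j)$, and the decomposition follows.
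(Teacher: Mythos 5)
Your proof is correct and follows essentially the same route as the paper: both arguments exhibit the symmetric idempotents of the summands (your $q_\alpha=p_\alpha-e_0$ is literally the paper's $\frac{1}{9}(1+\alpha+\alpha^2)(2-\beta-\beta^2)$, and your $s_\alpha$ is the paper's $\frac{1}{3}(1+\alpha+\alpha^2)$ viewed in the corner algebra $\frac{1+j}{2}\End_{\QQ}(J\tC)\frac{1+j}{2}$) and verify that they sum to the identity. The only real difference is that you spell out the justification of the key cancellation $(1+j)e_0=0$ via $j$ acting as $-1$ on $f^*JH$, which the paper states more tersely as ``$\sum_{0\leq a,b\leq 2}\sigma^a\tau^b=0$ on $JC_{00}$''.
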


\begin{proof}
    
    The symmetric idempotent of $f^*JH$ is given by $$\varepsilon_{f^*JH} = \frac{\sum_{0\leq a,b \leq 2}\sigma^a\tau^b}{9}.$$ Note that the idempotent of $k_{\sigma}^*P(h_{\sigma})$ is given by $$\varepsilon_{k_{\sigma}^*P(h_{\sigma})} = \frac{(1 + \sigma + \sigma^2)(2 - \tau - \tau^2)}{9}.$$
    Therefore, we get $$\varepsilon_{f^*JH} + \varepsilon_{k_{\sigma}^*P(h_{\sigma})} + \varepsilon_{k_{\tau}^*P(h_{\tau})} + \varepsilon_{k_{\sigma\tau}^*P(h_{\sigma\tau})} + \varepsilon_{k_{\sigma^2\tau}^*P(h_{\sigma^2\tau})} = 1.$$

    Now, note that the morphism $1 + \sigma + \sigma^2$ induces a well-defined morphism of $JC_{00}$ since it commutes with $(1 + j)$. In fact, this is precisely the norm endomorphism of $(l_{\sigma}^{00})^*JE_{\sigma,j}$.
    Thus, we get $$\varepsilon_{(l_{\tau}^{00})^*JE_{\tau, j}} + \varepsilon_{(l_{\sigma}^{00})^*JE_{\sigma, j}} + \varepsilon_{(l_{\sigma\tau}^{00})^*JE_{\sigma\tau, j}} + \varepsilon_{(l_{\sigma^2\tau}^{00})^*JE_{\sigma^2\tau, j}} = \frac{3 + \sum_{0 \leq a,b \leq 2}\sigma^a\tau^b}{3} = 1,$$ since on $JC_{00}$ we have $$\sum_{0\leq a,b \leq 2}\sigma^a\tau^b = 0.$$ 
\end{proof}

\begin{cor}
\label{decprym}
    The isotypical decomposition of $P=P(f)$ is as follows:
    $$P = k_{\sigma}^*P(h_{\sigma}) \boxplus k_{\tau}^*P(h_{\tau}) \boxplus k_{\sigma\tau}^*P(h_{\sigma\tau}) \boxplus  k_{\sigma^2\tau}^*P(h_{\sigma^2\tau}).$$
\end{cor}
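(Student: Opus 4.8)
The plan is to read off the statement from Proposition \ref{rozklad} by recognising the Prym variety $P$ as the ``non-invariant'' part of $J\tC$, i.e. the $\boxplus$-complement of $f^*JH$. The first step I would carry out is to check that each summand on the right-hand side actually lies in $P$. This is a one-line Norm computation: factoring $f = h_{\alpha}\circ k_{\alpha}$ gives $\Nm_f = \Nm_{h_{\alpha}}\circ \Nm_{k_{\alpha}}$, so for $x = k_{\alpha}^* y$ with $y\in P(h_{\alpha})$ one gets $\Nm_{k_{\alpha}}(k_{\alpha}^* y) = (\deg k_{\alpha})\,y = 3y$ and hence $\Nm_f(x) = 3\,\Nm_{h_{\alpha}}(y) = 0$. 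Since $k_{\alpha}^*P(h_{\alpha})$ is connected and contained in $\ker\Nm_f$, it lies in $P = (\ker\Nm_f)^0$; thus the sum $B := k_{\sigma}^*P(h_{\sigma}) + k_{\tau}^*P(h_{\tau}) + k_{\sigma\tau}^*P(h_{\sigma\tau}) + k_{\sigma^2\tau}^*P(h_{\sigma^2\tau})$ satisfies $B\subseteq P$.

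Next I would pass to symmetric idempotents. The four subvarieties of $B$, together with $f^*JH$, form the $\boxplus$-decomposition of $J\tC$ from Proposition \ref{rozklad}, so their four (mutually orthogonal, symmetric) idempotents sum to $1 - \varepsilon_{f^*JH}$. In particular this common sum is the symmetric idempotent of $B$, the four pieces give a $\boxplus$-decomposition of $B$, and $J\tC = f^*JH \boxplus B$. Reading dimensions off the latter equality yields $\dim B = \dim J\tC - \dim f^*JH$.

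It then remains to match dimensions. One has $\dim J\tC = g(\tC) = 10$ and, since $f^*$ has finite kernel $\langle\eta,\zeta\rangle$, $\dim f^*JH = \dim JH = 2$, so $\dim B = 8$. On the other hand $\dim P = 8$ as well, which I would take directly from Lemma \ref{polariz}: the polarisation type of $\Theta_P$ has exactly eight entries. Since $B \subseteq P$ are abelian subvarieties of the same dimension and $P$ is connected, we conclude $B = P$, which is precisely the claimed $\boxplus$-decomposition.

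The whole argument is essentially bookkeeping on top of Proposition \ref{rozklad}, and I expect no serious obstacle. The one place to be careful is the passage from $B\subseteq P$ to $B = P$: this relies on the two independent dimension computations ($\dim f^*JH = 2$, using finiteness of $\ker f^*$, and $\dim P = 8$ from Lemma \ref{polariz}) coinciding so that the inclusion is forced to be an equality. Equivalently, one is confirming that $P$ is the complementary abelian subvariety of $f^*JH$, i.e. $\varepsilon_P = 1 - \varepsilon_{f^*JH}$; with Lemma \ref{polariz} in hand this is immediate and no further calculation is needed.
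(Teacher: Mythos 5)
Your proposal is correct and follows essentially the route the paper intends: the paper states this as an immediate corollary of Proposition \ref{rozklad}, the implicit argument being that $P(f)$ is the complementary abelian subvariety of $f^*JH$ in $J\tC$, so its symmetric idempotent is $1-\varepsilon_{f^*JH}=\sum_\alpha\varepsilon_{k_\alpha^*P(h_\alpha)}$. Your norm computation for the inclusion $B\subseteq P$ and the dimension count $\dim B=\dim P=8$ are just an explicit and correct way of verifying that identification.
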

From now on, we will call isotypical components of $P(f)$ \textit{small Pryms}. 
As before, we will focus on the first component $k_{\sigma}^*P(h_{\sigma})$, denoted by $P_\s$ for simplicity.

\begin{rem}
\label{prymisog}
    It follows from \cite[Proposition 2.5]{ortega} or \cite[Theorem 1, page 67]{ries} that the Prym variety of the covering $h_{\sigma}:C_{\sigma} \to H$ is abstractly isomorphic to $E_{\sigma, j} \times E_{\sigma, j\tau}$. Hence, using Proposition \ref{rozklad} and Corollary \ref{decprym} we can deduce that the Prym variety $P$ is abstractly isogenous to $JC_{00} \times JC_{10}$.
\end{rem}

Now, we would like to compute the restricted polarisation types of the small Pryms. We start with the following lemma.
\begin{lem}\label{trivialkernel}
\label{kernel}
    The (restricted) kernel $\ker k_{\sigma}^*|_{P(h_{\sigma})}$ is trivial in the non-isotropic case, and has cardinality $3$ in the isotropic case. In particular, in the non-isotropic case, we have $P_\s\simeq P(h_{\sigma})$.
\end{lem}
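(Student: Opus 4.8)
The plan is to reduce the statement to a single question about one $3$-torsion point and then settle it via the Weil pairing. Since $k_\sigma \colon \tilde C \to C_\sigma$ is the étale cyclic triple covering determined by $\xi := h_\sigma^*(\zeta) \in JC_\sigma[3]$, the theory of cyclic coverings gives $\ker k_\sigma^* = \langle \xi\rangle$ \cite[Section 11]{BL}. The point $\xi$ is nonzero: otherwise $\zeta \in \ker h_\sigma^* = \langle\eta\rangle$ and $\langle\eta,\zeta\rangle$ would be cyclic, contradicting $\langle\eta,\zeta\rangle \cong \mathbb{Z}_3\times\mathbb{Z}_3$. Hence $\langle\xi\rangle \cong \mathbb{Z}_3$ and the restricted kernel is $\ker k_\sigma^*|_{P(h_\sigma)} = \langle\xi\rangle \cap P(h_\sigma)$, a group of order $1$ or $3$, of order $3$ precisely when $\xi \in P(h_\sigma)$. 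Thus everything reduces to deciding whether $\xi$ lies in $P(h_\sigma)$.

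To decide this I would use the theory of complementary abelian subvarieties. Inside the principally polarised $(JC_\sigma,\Theta_{C_\sigma})$ the subvarieties $A := h_\sigma^* JH$ and $B := P(h_\sigma)$ are complementary, so their finite intersection satisfies $A\cap B = K(\Theta_{C_\sigma}|_A)$; this follows from $\phi_{\Theta_{C_\sigma}|_A} = \widehat{\iota_A}\circ\phi_{\Theta_{C_\sigma}}\circ\iota_A$ together with the characterisation $\phi_{\Theta_{C_\sigma}}(B) = \ker\widehat{\iota_A}$ of the complement. Since $\xi\in A$, this yields the clean equivalence $\xi\in P(h_\sigma) \iff \xi \in K(\Theta_{C_\sigma}|_A)$.

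It then remains to compute $K(\Theta_{C_\sigma}|_A)$ explicitly. The map $h_\sigma^*\colon JH\to A$ is an isogeny with kernel $\langle\eta\rangle$, and for the étale degree-$3$ covering $h_\sigma$ one has $(h_\sigma^*)^*\Theta_{C_\sigma}\equiv 3\Theta_H$, since $\phi_{(h_\sigma^*)^*\Theta_{C_\sigma}} = \widehat{h_\sigma^*}\circ\phi_{\Theta_{C_\sigma}}\circ h_\sigma^* = \Nm_{h_\sigma}\circ h_\sigma^* = [3]$. Descending this polarisation along the isogeny gives $K(\Theta_{C_\sigma}|_A) = \langle\eta\rangle^{\perp}/\langle\eta\rangle$, where $\perp$ is taken with respect to the Weil pairing $e^{3\Theta_H}$ on $K(3\Theta_H)=JH[3]$ \cite[Chap 6.3]{BL}. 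Under $h_\sigma^*$ the point $\xi$ is the image of $\zeta$, so $\xi\in\langle\eta\rangle^{\perp}/\langle\eta\rangle$ if and only if $\zeta\in\langle\eta\rangle^{\perp}$, that is, $e^{3\Theta_H}(\eta,\zeta)=1$. By Definition \ref{defniso} this is exactly the isotropy condition. Hence $\xi\in P(h_\sigma)$ in the isotropic case and $\xi\notin P(h_\sigma)$ in the non-isotropic case, giving the stated cardinalities $3$ and $1$. In the non-isotropic case the triviality of $\ker k_\sigma^*|_{P(h_\sigma)}$ means $k_\sigma^*$ is injective on $P(h_\sigma)$, whence $P_\sigma = k_\sigma^*P(h_\sigma)\simeq P(h_\sigma)$.

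The main obstacle I anticipate is the middle step: pinning down the identification $A\cap B = K(\Theta_{C_\sigma}|_A)$ for complementary subvarieties and, in particular, verifying that the form induced on $K(3\Theta_H)$ really is the Weil pairing $e^{3\Theta_H}$ occurring in Definition \ref{defniso}, so that the isotropy dichotomy emerges with the correct normalisation. Once these identifications are in place, the kernel computation is routine symplectic linear algebra over $\mathbb{F}_3$.
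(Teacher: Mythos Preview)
Your proof is correct and follows essentially the same approach as the paper: both reduce to deciding whether $h_\sigma^*(\zeta)$ lies in $P(h_\sigma)$ by using the identity $h_\sigma^*JH \cap P(h_\sigma) = h_\sigma^*\bigl((\ker h_\sigma^*)^{\perp}\bigr)$, and then observe that $\zeta \in \langle\eta\rangle^{\perp}$ is precisely the isotropy condition. The paper simply asserts this intersection formula, whereas you supply a derivation via complementary subvarieties and descent of the polarisation $3\Theta_H$ along $h_\sigma^*$; this extra justification is welcome but does not change the argument.
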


\begin{proof}
    Recall that $\ker f^* = \langle \eta, \zeta \rangle$, $\ker h_{\sigma}^* = \langle \eta \rangle$ and $\ker k_{\sigma}^* = \langle h_\sigma^*(\zeta) \rangle$. Since $$h_{\sigma}^*JH \cap P(h_{\sigma}) = h_{\sigma}^*((\ker h_{\sigma}^*)^{\bot})$$ it follows that in the non-isotropic case $h_{\s}^*(\zeta) \notin P(h_{\s})$ as $\zeta \notin (\ker h_{\s}^*)^{\bot}$.
    However, in the isotropic case we have $\zeta \in (\ker h_{\s}^*)^{\bot}$, so $h_{\sigma}^*(\zeta) \in P(h_{\sigma})$.
\end{proof}

\begin{lem}
\label{typeprym}
    The type of polarisation on $P_\s$ is $(3,9)$ in the non-isotropic case and $(3,3)$ in the isotropic case.
\end{lem}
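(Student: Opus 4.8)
The plan is to reduce the statement to two inputs: the type of the polarisation that the principal polarisation of $JC_\s$ restricts to on the classical Prym $P(h_\s)$, and the effect of the degree-$3$ pullback $k_\s^*$ on polarisations. First I would determine the restriction $N:=\Theta_{C_\s}|_{P(h_\s)}$. Since $h_\s:C_\s\to H$ is an unramified cyclic triple covering we have $\dim P(h_\s)=g(C_\s)-g(H)=4-2=2$, and by \cite[Proposition 3.2.9]{LangeRodr} its kernel is $K(N)\simeq\langle\eta\rangle^{\perp}/\langle\eta\rangle$, where $\perp$ is taken in the symplectic space $(JH[3],\omega)$. As $\langle\eta\rangle$ is a line in a $4$-dimensional space, this quotient is isomorphic to $\ZZ_3\times\ZZ_3$, forcing $N$ to have type $(1,3)$.

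Next I would pull back along $k_\s$. Because $k_\s:\tC\to C_\s$ has degree $3$, and $\Nm_{k_\s}$ is the dual isogeny of $k_\s^*$ under the principal polarisations, the identity $\Nm_{k_\s}\circ k_\s^*=3$ shows that $(k_\s^*)^*\Theta_{\tC}$ is algebraically equivalent to $3\Theta_{C_\s}$ on $JC_\s$. Restricting to $P(h_\s)$ gives the polarisation $M:=3N$, which is of type $(3,9)$. In the non-isotropic case Lemma \ref{kernel} says that $k_\s^*|_{P(h_\s)}$ is an isomorphism onto $P_\s$, so $\Theta_P|_{P_\s}$ is carried exactly to $M$; hence $P_\s$ has type $(3,9)$, as claimed.

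For the isotropic case the map $\pi:=k_\s^*|_{P(h_\s)}\colon P(h_\s)\to P_\s$ is an isogeny with kernel $K=\langle h_\s^*(\zeta)\rangle\simeq\ZZ_3$, and $L:=\Theta_P|_{P_\s}$ satisfies $\pi^*L=M$. Applying the descent formula $K(L)\simeq K^{\perp}/K$, where $\perp$ is now with respect to the commutator pairing $e^M$ on $K(M)\simeq(\ZZ_3\times\ZZ_9)^2$ (the same type of formula used in Lemma \ref{polariz}), reduces everything to locating $K$ inside $K(M)$. The hard part will be to show that $h_\s^*(\zeta)$ lies in the subgroup $K(N)\subset K(M)$. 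I would obtain this from the fact that $P(h_\s)$ and $h_\s^*JH$ are complementary abelian subvarieties of $JC_\s$, so that $K(N)=P(h_\s)\cap h_\s^*JH$; since isotropy gives $\zeta\in\langle\eta\rangle^{\perp}$, Lemma \ref{kernel} yields $h_\s^*(\zeta)\in P(h_\s)$, while trivially $h_\s^*(\zeta)\in h_\s^*JH$, whence $h_\s^*(\zeta)\in K(N)$.

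Finally I would identify $K(N)$ inside $K(M)=K(3N)$ as exactly the order-$3$ elements lying in the two ``order-$9$ directions''; concretely, in a symplectic basis $a_1,a_2,b_1,b_2$ realising $e^M$ on $(\ZZ_3\times\ZZ_9)^2$ with $a_2,b_2$ generating the $\ZZ_9$-factors, one has $K(N)=\langle 3a_2,3b_2\rangle$. A direct computation then shows that for every nonzero $v\in\langle 3a_2,3b_2\rangle$ the order-$9$ elements of $v^{\perp}$ become $3$-torsion modulo $v$, so that $v^{\perp}/\langle v\rangle\simeq(\ZZ_3)^4$. Therefore $K(L)\simeq(\ZZ_3)^4$ and $P_\s$ has type $(3,3)$, completing the isotropic case.
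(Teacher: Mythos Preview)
Your argument is correct and follows essentially the same route as the paper's: both cases rest on identifying the $(3,9)$ pullback polarisation $M=3N$ on $P(h_\s)$ and, in the isotropic case, on the key observation that $h_\s^*(\zeta)\in K(N)=P(h_\s)\cap h_\s^*JH$. The only cosmetic difference is that you finish by computing $K(L)\simeq K^{\perp}/K$ directly in a symplectic basis of $K(M)$, whereas the paper first reduces to the dichotomy $(3,3)$ vs.\ $(1,9)$ via $\chi(M)=\deg\pi\cdot\chi(L)$ and then invokes \cite[Lemma~2.4.7(c)]{BL} to exhibit a primitive $9$-torsion $z\in K(M)$ with $3z=h_\s^*(\zeta)$.
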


\begin{proof}
Let us assume the covering is non-isotropic. 
Let $i: P(h_\s)\to JC_\s$ be an embedding of the small Prym into the Jacobian of the quotient curve. By Lemma \ref{trivialkernel}, the map $k_{\s}^*\circ i: P(h_\s)\to J\tC$ is an embedding (even though $k_{\s}^*$ is not), hence the restricted polarisation type on $P_\s$ can be computed from the type of the pullback polarisation on $JC_\s$ restricted to $P(h_\s)$. 

The restricted polarisation from $J\tC$ to $JC_\s$ is three times the principal polarisation because $k_\s$ is a triple covering.
The restricted polarisation from $JC_\s$ (seen as principally polarised) to $P(h_\s)$ is of type $(1,3)$ because $h_\s$ is a triple unramified cyclic covering.
Therefore, the restricted polarisation from $J\tC$ to $P_{\s}$ is of type $(3,9)$.

If the covering is isotropic then $k_\s^*|_{P(h_\s)}$ is not injective. Denoting by $\pi$ the quotient map $P\left(h_\sigma\right) \rightarrow P\left(h_\sigma\right) / h_\sigma^*(\zeta) \simeq P_{\sigma}$, we get the following diagram with the bottom arrow being embedding of $P_\s$ to $J\tC$:
\begin{equation}
\begin{tikzcd}[row sep=huge]
P(h_\s) \arrow[r,"i"] \arrow[d,swap,"\pi"] & JC_\s \arrow[d,swap,"k_\s^*"]
\\
P_\s \arrow[r,""] & J\tC 
\end{tikzcd}
\end{equation}
By the same argument as before, we know that the pullback polarisation from $J\tC$ to $P(h_\s)$ is of type $(3,9)$. Since $\pi$ is of degree 3, we have two possibilities for the type of restricted polarisation on $P_{\s}$, either $(3,3)$ or $(1,9)$. 

Now, note that $h_\sigma^*(\zeta)\in P(h_\s)\cap h^*(JC)$ lies in the kernel of $(1,3)$ polarisation on $P(h_\s)$. Thus, it follows from \cite{BL}[Lemma 2.4.7 c)] that there exists a primitive $9$-torsion point $z$ in the kernel of the $(3,9)$ polarisation on $P(h_\s)$ such that $3z = h_\sigma^*(\zeta)$. Therefore, the type of restricted polarisation on $P_{\s}$ is $(3,3)$.

\end{proof}

\begin{lem}
\label{degree}
     Let $$\Phi: f^*JH \times P_{\s} \times P_{\tau} \times P_{\s\tau} \times P_{\s^2\tau} \to J\tC$$ and $$\mu: P_{\s} \times P_{\tau} \times P_{\s\tau} \times P_{\s^2\tau} \to P(f)$$ be the addition maps. Then $$\deg \Phi = \begin{cases}
         3^{14}, \text{ if $f$ is non-isotropic}, \\
         3^{10}, \text{ if $f$ is isotropic.}
     \end{cases}$$ and $$\deg \mu = \begin{cases}
         3^{10}, \text{ if $f$ is non-isotropic}, \\
         3^{6}, \text{ if $f$ is isotropic.}
     \end{cases}$$
\end{lem}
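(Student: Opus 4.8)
The plan is to regard $\Phi$ and $\mu$ as isogenies of polarised abelian varieties and to read off their degrees from the polarisation types of the factors. The main tool is the following. If $\nu\colon A_1\times\cdots\times A_n\to A$ is the addition map of a decomposition $A=A_1\boxplus\cdots\boxplus A_n$, with $L_i$ the polarisation on $A_i$ restricted from the polarisation $L$ of $A$, then $\nu^*L$ is algebraically equivalent to the exterior product $L_1\boxtimes\cdots\boxtimes L_n$; combining $\phi_{\nu^*L}=\hat\nu\circ\phi_L\circ\nu$ with $\deg\hat\nu=\deg\nu$ and $\phi_{L_1\boxtimes\cdots\boxtimes L_n}=\prod_i\phi_{L_i}$ gives
\begin{equation*}
(\deg\nu)^2\cdot\deg\phi_L=\prod_{i=1}^n\deg\phi_{L_i}.
\end{equation*}
The box-product identity holds because the idempotents exhibited in Proposition \ref{rozklad} are pairwise orthogonal symmetric idempotents: since $G=\ZZ_3\times\ZZ_3$ is abelian, $\QQ[G]$ is commutative and these central idempotents satisfy $\varepsilon_i\varepsilon_j=0$, which makes the summands mutually orthogonal for $\Theta_{\tC}$ (resp. $\Theta_P$). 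I will also use that a polarisation of type $(d_1,\dots,d_g)$ has $\deg\phi_L=\#K(L)=(d_1\cdots d_g)^2$.

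The next step is to record $\deg\phi$ for every factor. By Lemma \ref{typeprym} each small Prym $P_\alpha$ has type $(3,9)$ in the non-isotropic case and $(3,3)$ in the isotropic case, so $\deg\phi_{L_{P_\alpha}}=3^6$ resp. $3^4$, and there are four such factors. The target $J\tC$ of $\Phi$ is principally polarised, so $\deg\phi_L=1$, whereas the target $P(f)$ of $\mu$ has type $(1,\dots,1,9)$ resp. $(1,\dots,1,3,3)$ by Lemma \ref{polariz}, giving $\deg\phi_{\Theta_P}=9^2=3^4$ in both cases. For the remaining factor $f^*JH$ I would argue as in Lemma \ref{polariz}: its polarisation is the descent of $(f^*)^*\Theta_{\tC}=9\,\Theta_H$ through $JH\to JH/\ker f^*=f^*JH$, so $K(\Theta|_{f^*JH})\simeq(\ker f^*)^{\bot}/\ker f^*$ inside $K(9\Theta_H)=JH[9]$. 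Because every $3$-torsion point is of the form $3u$ and $e^{9\Theta_H}(3u,3v)=e^{9\Theta_H}(u,v)^9=1$, the subgroup $\ker f^*\subset JH[3]$ is isotropic in $JH[9]$; nondegeneracy of the Weil form then gives $\#(\ker f^*)^{\bot}=\#JH[9]/\#\ker f^*$, whence
\begin{equation*}
\#K(\Theta|_{f^*JH})=\frac{\#JH[9]}{(\#\ker f^*)^2}=\frac{3^8}{3^4}=3^4,
\end{equation*}
so $\deg\phi_{L_{f^*JH}}=3^4$ in both cases.

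It then remains to substitute. For $\mu$ the four factors contribute $(3^6)^4=3^{24}$ (non-isotropic) or $(3^4)^4=3^{16}$ (isotropic) and the target contributes $3^4$, so $(\deg\mu)^2=3^{24}/3^4=3^{20}$ resp. $3^{16}/3^4=3^{12}$, i.e. $\deg\mu=3^{10}$ resp. $3^6$. For $\Phi$ the factors contribute $3^4\cdot(3^6)^4=3^{28}$ resp. $3^4\cdot(3^4)^4=3^{20}$ and the target $J\tC$ contributes $1$, so $(\deg\Phi)^2=3^{28}$ resp. $3^{20}$, i.e. $\deg\Phi=3^{14}$ resp. $3^{10}$. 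As a check, since $f^*JH$ and $P(f)$ are complementary in the principally polarised $J\tC$ one has $\deg\Phi=\#(f^*JH\cap P(f))\cdot\deg\mu=3^4\cdot\deg\mu$, which reproduces all four values.

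I expect the only genuinely delicate point to be the justification of $\nu^*L\equiv L_1\boxtimes\cdots\boxtimes L_n$, that is, the mutual orthogonality of the $\boxplus$-summands; once this is in place the argument is pure bookkeeping with the types from Lemmas \ref{typeprym} and \ref{polariz}. The computation of $\deg\phi_{L_{f^*JH}}$ is the single new input, and the isotropy of $\ker f^*$ in $JH[9]$ is what makes it uniform across the isotropic and non-isotropic cases.
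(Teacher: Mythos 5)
Your proposal is correct and follows essentially the same route as the paper: the paper applies \cite[Cor 3.6.6]{BL} in the form $\deg\Phi=\chi(\Phi^*\widetilde{\Theta})/\chi(\widetilde{\Theta})$ with $\Phi^*\widetilde{\Theta}$ the product polarisation of type $(1,3,3,3,3,9,9,9,9,9)$, which is just the unsquared version of your identity $(\deg\nu)^2\deg\phi_L=\prod_i\deg\phi_{L_i}$, and then reads off the same numbers from Lemmas \ref{polariz} and \ref{typeprym}. The only additions on your side are making explicit two points the paper leaves implicit, namely the orthogonality of the idempotents behind the box-product identity and the computation that the restricted polarisation on $f^*JH$ has $\#K=3^4$ (i.e. $\chi=9$) in both cases; both are correct.
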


\begin{proof}
     Assume that $f$ is non-isotropic. The Jacobian variety $(J\tC, \Tilde{\Theta})$ is principally polarised, and by Lemma \ref{typeprym} the product polarisation $\Xi = \Phi^*\Tilde{\Theta}$ on $f^*JH \times P_{\s} \times P_{\tau} \times P_{\s\tau} \times P_{\s^2\tau}$ is of type $(1,3,3,3,3,9,9,9,9,9)$. By \cite[Cor 3.6.6]{BL} we have $$\deg \Phi = \frac{\chi(\Xi)}{\chi(\Tilde{\Theta})} = \frac{3^4\cdot 9^5}{1} = 3^{14}.$$
     Other computations follow from Lemmas \ref{polariz} and \ref{typeprym} analogously. 
\end{proof}

\section{A proof of the main Theorem}

Our aim is to prove that the following Prym maps are globally injective $$\mathcal{P}^{iso}:(\mathcal{R}_2^{\mathbb{Z}_3\times \mathbb{Z}_3})^{iso} \to \cA^{(1,1,1,1,1,1,3,3)}_8$$ 
$$\mathcal{P}^{ni}:(\mathcal{R}_2^{\mathbb{Z}_3\times \mathbb{Z}_3})^{ni} \to\cA^{(1,1,1,1,1,1,1,9)}_8$$

In this section, when we do not want to distinguish isotropic and non-isotropic cases we denote the Prym map simply by $\mathcal{P}:\mathcal{R}_2^{\mathbb{Z}_3\times\mathbb{Z}_3} \to \mathcal{A}^{\mathbb{Z}_3\times\mathbb{Z}_3}$.

We start with the following proposition, where we show that the action of the Galois group $\mathbb{Z}_3 \times \mathbb{Z}_3$ on the Prym variety $P$ can be recovered from the restricted polarisation $\Theta_P$. The Proposition is analogous to \cite[Proposition 3.1]{sophie}. 

\begin{prop}
\label{group}
    Let $(P, \Theta_P)\in\im\mathcal{P}$. Then the subgroup of automorphisms $$N := \{ \psi \in \Aut(P,\Theta_P) \: | \: \psi \text{ has odd order and} \: \: \psi(x) = x \:\: \forall x \in K(\Theta_P)\}.$$
    is isomorphic to $\langle \sigma, \tau \rangle \simeq \mathbb{Z}_3 \times \mathbb{Z}_3$.
\end{prop}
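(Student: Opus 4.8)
The strategy is to prove the two inclusions $\langle\s,\tau\rangle\subseteq N$ and $N\subseteq\langle\s,\tau\rangle$ separately. The first is elementary: acting on $J\tC$ by pullback, $\s$ and $\tau$ preserve the principal polarisation and stabilise $P$ (they are compatible with the idempotent decomposition of Corollary \ref{decprym}), so they restrict to elements of $\Aut(P,\Theta_P)$. Their restrictions have order exactly $3$ and generate a copy of $\ZZ_3\times\ZZ_3$: indeed $\s$ acts trivially on $P_\s$ but nontrivially on $P_\tau$, and symmetrically, so $\s^a\tau^b|_P=\id$ forces $a=b=0$. Finally they fix $K(\Theta_P)$ pointwise: by the proof of Lemma \ref{polariz} one has $K(\Theta_P)\simeq(\ker f^*)^{\bot}/\ker f^*$, every class being represented by a pullback $f^*y$ with $y\in(\ker f^*)^{\bot}\subset JH$, and since $f\circ\s=f=f\circ\tau$ we get $\s\circ f^*=f^*=\tau\circ f^*$. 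Hence $\langle\s,\tau\rangle\hookrightarrow N$.

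For the reverse inclusion the key step is to extend an arbitrary $\psi\in N$ to an automorphism of the principally polarised Jacobian $(J\tC,\Tilde\Theta)$. By Proposition \ref{rozklad} and Corollary \ref{decprym} the addition map gives an isogeny $\Phi\colon f^*JH\times P\to J\tC$; its kernel is isomorphic to $f^*JH\cap P$, which for this complementary pair inside the principally polarised $(J\tC,\Tilde\Theta)$ is canonically identified with $K(\Theta_P)$ (and, anti-isometrically, with $K(\Theta_{f^*JH})$). Since $\psi$ fixes $K(\Theta_P)$ pointwise by hypothesis and the identity fixes $K(\Theta_{f^*JH})$, the product $\id\times\psi$ preserves $\ker\Phi$ and therefore descends to an endomorphism $\Tilde\psi$ of $J\tC$; it is an automorphism preserving $\Tilde\Theta$ because $\id\times\psi$ preserves the product polarisation $\Phi^*\Tilde\Theta$. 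By construction $\Tilde\psi|_{f^*JH}=\id$, $\Tilde\psi|_{P}=\psi$, and $\Tilde\psi$ has odd order.

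Now the Torelli theorem applies: every odd-order element of $\Aut(J\tC,\Tilde\Theta)=\langle\Aut(\tC),-1\rangle$ is induced by a unique automorphism $\phi\in\Aut(\tC)$ of odd order (the element $-1$ is central of order $2$, so any product involving it has even order). The assignment $\psi\mapsto\phi$ is injective, since $\phi$ being the identity on both $f^*JH$ and $P$ forces $\phi=\id$ on $J\tC$ and hence on $\tC$; so $N$ is identified with the set of odd-order $\phi\in\Aut(\tC)$ acting as the identity on $f^*JH$. It remains to show this set is exactly the deck group $\langle\s,\tau\rangle$, one inclusion already being the first part.

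This last step is the main obstacle, i.e.\ bounding $N$ from above and ruling out spurious odd-order automorphisms fixing $f^*JH$ pointwise. When $\Aut(\tC)=F$ it is immediate, because the odd-order elements of $F$ are precisely $\s^a\tau^b$. In general I would argue that such a $\phi$ descends to the identity on $H$: writing $q\colon\tC\to\tC/\langle\phi\rangle$, the invariant subvariety $q^*J(\tC/\langle\phi\rangle)$ contains $f^*JH$, and combining Riemann--Hurwitz with the fact that $f^*JH$ is two-dimensional one constrains the cyclic group $\langle\phi\rangle$; once $\phi$ is shown to normalise $G$ it induces $\bar\phi\in\Aut(H)$ with $f^*\circ\bar\phi=\phi\circ f^*=f^*$, whence $\bar\phi=\id$ on $JH$ and, by Torelli for the genus $2$ curve $H$ together with the injectivity of $\Aut(H)\hookrightarrow\Aut(JH)$, $\bar\phi=\id_H$, so $\phi$ is a deck transformation and $\phi\in\langle\s,\tau\rangle$. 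The delicate point is precisely controlling curves $\tC$ whose automorphism group strictly exceeds $F$, where the descent/normalisation claim must be established with care rather than read off from the list of elements of $F$.
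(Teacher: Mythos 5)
Your proposal reproduces the paper's strategy faithfully up to and including the Torelli step: the inclusion $\langle\sigma,\tau\rangle\subseteq N$ via the identification $K(\Theta_P)=f^*JH\cap P$, the descent of $\id\times\psi$ through the addition isogeny $f^*JH\times P\to J\tC$ (using that $\psi$ fixes the anti-diagonal kernel pointwise), the observation that $\tilde\psi$ preserves $\tilde\Theta$ because its pullback under the isogeny does and the isogeny is injective on N\'eron--Severi, and the use of odd order to lift $\tilde\psi$ to $\tilde\psi_0\in\Aut(\tC)$ rather than $-\tilde\psi_0$. All of that is correct and is essentially the paper's argument.

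However, there is a genuine gap at the final and decisive step, which you yourself flag: showing that $\tilde\psi_0$ is a deck transformation of $f$. Your proposed route --- constrain $\langle\tilde\psi_0\rangle$ by Riemann--Hurwitz, show it normalises $G$, then descend to $\bar\psi_0\in\Aut(H)$ --- is not carried out, and the fallback ``when $\Aut(\tC)=F$ it is immediate'' only covers the generic curve, whereas the proposition is needed for \emph{global} injectivity. The paper closes this gap with a short Norm map argument that requires no control of $\Aut(\tC)$ and no normalisation claim: since $\tilde\psi$ is the identity on $f^*JH$ and maps $P$ into itself, writing any $x\in J\tC$ as $y+f^*(z)$ with $y\in P$ gives $(\Id_{J\tC}-\tilde\psi)(x)=y-\tilde\psi(y)\in P\subseteq\ker\Nm_f$, so $\Nm_f\circ(\Id_{J\tC}-\tilde\psi)=0$. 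Applying this to $x=p_1-p_2$ for $p_1,p_2$ in a common fibre of $f$ yields $f(\tilde\psi_0(p_1))-f(\tilde\psi_0(p_2))=0$ in $JH$, hence $f(\tilde\psi_0(p_1))=f(\tilde\psi_0(p_2))$ because $g(H)=2>0$; thus $\tilde\psi_0$ maps fibres to fibres and descends to some $\psi_0\in\Aut(H)$. Then $f^*\circ\psi_0=\tilde\psi_0\circ f^*=f^*$ and finiteness of $\ker f^*$ force $\psi_0=\id_{JH}$, hence $\psi_0=\id_H$ and $\tilde\psi_0\in\Gal(\tC/H)=\langle\sigma,\tau\rangle$. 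Injectivity of $\psi\mapsto\tilde\psi_0$ then finishes the proof exactly as you indicate. You should replace your Riemann--Hurwitz/normalisation sketch with this Norm map computation.
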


\begin{proof}

    Let $f: \tC \to H$ be a covering with ${P}(f) = P$ and Galois group $\langle \sigma, \tau \rangle \simeq \mathbb{Z}_3 \times \mathbb{Z}_3$. Note that $K(\Theta_P) = f^*JH \cap P(f) \subset \Fix(\tau) \cap \Fix(\sigma)$ since the symmetric idempotent $$\varepsilon_{f^*JH} = \frac{\sum_{0\leq a,b \leq 2}\sigma^a\tau^b}{9}$$ of $f^*JH$ is fixed by both $\sigma, \tau \in \Aut(J\tC)$. Therefore, for induced automorphisms $\sigma, \tau \in \Aut(P, \Theta_P)$ we have $\langle \sigma, \tau \rangle \subseteq N$.  
    Let $\psi \in N$. Then, there is an automorphism $\Tilde{\psi}: J\tC \to J\tC$ such that the following diagram commutes:


\begin{equation}
        \begin{tikzcd}
0 \arrow[r] & K(\Theta_P) \arrow[d, equal] \arrow[r] & f^*JH \times P \arrow[d, "{(\id,\psi)}"] \arrow[r, "\mu"] & J\tC \arrow[d, "\Tilde{\psi}"] \arrow[r] & 0 \\
0 \arrow[r] & K(\Theta_P) \arrow[r]                                & f^*JH \times P \arrow[r, "\mu"]                           & J\tC \arrow[r]                           & 0
\end{tikzcd}
\end{equation}


where $\mu$ is the addition map. We want to show that $\Tilde{\psi}$ is polarised.

From the diagram we get that $\mu^*\Tilde{\psi}^*\mathcal{O}_{J\tC}(\Tilde{\Theta})$ and $ \mu^*\mathcal{O}_{J\tC}(\Tilde{\Theta})$ are equal as polarisations on $f^*JH \times P$ because $(\id,\psi)$ is a polarised isomorphism.
Now, since $\mu^*$ has a finite kernel, so  
$\Tilde{\psi}^*\mathcal{O}_{J\tC}(\Tilde{\Theta}) \otimes \mathcal{O}_{J\tC}(\Tilde{\Theta})^{-1}$ is a torsion sheaf, hence it belongs to $\Pic^0(J\tC)$. 
Therefore, $\Tilde{\psi}^*\mathcal{O}_{J\tC}(\Tilde{\Theta})$ induces the canonical polarisation $\Tilde{\Theta}$ on $J\tC$. 

Recall that \cite[Exercise 11.19]{BL} $$\Aut(\tC) = \begin{cases} \Aut(J\tC, \tilde{\Theta})  \text{ if } \tC \text{ is hyperelliptic} \\  \Aut(J\tC, \tilde{\Theta})/ \langle -1_{J\tC} \rangle \text{ if } \tC \text{ is nonhyperelliptic.} \end{cases}$$ Therefore, since $\Tilde{\psi}$ has odd order, there is an automorphism $\Tilde{\psi}_0$ of $\tC$ inducing $\Tilde{\psi}$. To prove that $\Tilde{\psi}_0 \in \Gal(\tC/H)$ we have to show that $\Tilde{\psi}_0$ descends to an identity morphism on $H$. Let us first show that $\Nm_f \circ (Id_{J\tC} - \Tilde{\psi}) = 0$. Writing $x \in J\tC$ as $y + f^*(z)$ with $y \in P, z \in JH$ we have $$(\Id_{J\tC} - \Tilde{\psi})(x) = y + f^*(z) - (\Tilde{\psi}(y) + f^*(z)) = y - \Tilde{\psi}(y).$$ Since $\Tilde{\psi}(y) \in P$ for any $y \in P$ we get $\Nm_f \circ (Id_{J\tC} - \Tilde{\psi}) = 0$. Let $p_1,p_2 \in \tC$ be two points in the same fiber of $f$. Then
$\Nm_f(p_1 - p_2 - \Tilde{\psi}_0(p_1) + \Tilde{\psi}_0(p_2)) = 0$, hence $f(\Tilde{\psi}_0(p_1)) = f(\Tilde{\psi}_0(p_2))$. Therefore, $\Tilde{\psi}_0$ descends to an automorphism $\psi_0$ on $H$. Denoting the corresponding automorphism of $JH$ by the same letter we have $$f^* \circ \psi_0 = \Tilde{\psi}_0 \circ f^* = f^*$$ implying $$f^*\circ (\Id_{JH} - \psi_0) = 0.$$ Since $\ker f^*$ is finite, we get $\psi_0 = \Id_{JH}$, hence $\psi_0 \in \Aut(H)$ is the identity. Therefore, $\Tilde{\psi}_0$ lies in $\Gal(\tC/H)$ hence it is enough to show that $\psi \mapsto \Tilde{\psi}_0$ is an injection. Given two elements $\varphi, \psi \in N$ inducing the same $\alpha \in \Aut(\tC)$ we see that either $\tilde{\varphi} = \tilde{\psi}$ or $\tilde{\varphi} =  (-1)_{J\tC}\circ \tilde{\psi}$. The latter case is impossible since $\tilde{\varphi}_{|f^*JH} = \tilde{\psi}_{|f^*JH} = \id_{f^*JH}$, hence $\tilde{\varphi} = \tilde{\psi}$ implying $\varphi = \psi$. 
\end{proof} 

\vspace{3mm}

The next step of the proof is to recover the twelve elliptic curves inside the Prym variety. The following two propositions deal with non-isotropic and isotropic cases, respectively.

\begin{prop}
\label{elnoniso}
    Assume $f:\tC \to H$ is non-isotropic.
    Then the restricted polarisation on $P_{\sigma}$ is equal to $3\Theta_{\sigma}$ with $(P_{\sigma},\Theta_{\sigma}) \simeq (E \times E, \cO(E \times \{0\} + \{0\} \times E + A))$, where $A$ denotes the antidiagonal. Moreover, the curves $E_{\sigma,j}, E_{\sigma,j\tau}, E_{\sigma,j\tau^2}$ are unique elliptic curves on $P_{\sigma}$, defining a divisor corresponding to a polarising line bundle $\Theta_\s$.
\end{prop}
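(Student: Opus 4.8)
The plan is to reduce everything to an explicit model of $P_\s$ as a product of elliptic curves carrying an order‑three automorphism, and then to read off both the polarisation and the uniqueness from a short computation in the Néron--Severi group. Since $f$ is non-isotropic, Lemma \ref{trivialkernel} gives $P_\s \simeq P(h_\s)$, and by Remark \ref{prymisog} the latter is abstractly isomorphic to $E_{\s,j}\times E_{\s,j\tau}$; as the three curves $E_{\s,j},E_{\s,j\tau},E_{\s,j\tau^2}$ are mutually conjugate under $\tau$ they are all isomorphic to a single elliptic curve $E$, so $P_\s\simeq E\times E$. I would then record how $\langle\tau,j\rangle\simeq S_3\subset\Aut(C_\s)$ acts on $P_\s$: the order‑three element $\tau$ acts as a polarised automorphism (being induced from $\tC$ it preserves $\tilde\Theta$ and hence $\Theta_P$), while the three reflections $j,j\tau,j\tau^2$ act as involutions whose connected fixed loci are exactly the curves $E_{\s,j\tau^b}\subset P_\s$. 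Because $\tau(j\tau^b)\tau^{-1}=j\tau^{b+1}$, the element $\tau$ permutes these three curves cyclically.

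Using the product decomposition I take $E_{\s,j}=E\times\{0\}$ and $E_{\s,j\tau}=\{0\}\times E$, and choose the coordinate on the second factor so that $\tau(x,0)=(0,x)$. Writing $\tau$ as a matrix over $\End(E)$ and imposing $\tau^3=\id$ pins the action down: for $E$ without complex multiplication one is forced to $\tau(x,y)=(-y,x-y)$ (after possibly replacing $\tau$ by $\tau^2$), whose third invariant elliptic curve is the diagonal $\{(t,t)\}$. Conjugating by the polarised automorphism $(x,y)\mapsto(x,-y)$, which fixes the two factors and interchanges the diagonal with the antidiagonal $A$, I may assume $E_{\s,j\tau^2}=A$. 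This is the step I expect to be the main obstacle, both because the identification of $\tau$ with the explicit order‑three automorphism must be carried out carefully, and because elliptic curves $E$ with extra endomorphisms enlarge $\End(E)$ and must be treated separately.

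For the polarisation, recall from Lemma \ref{typeprym} that $\Theta_P|_{P_\s}$ has type $(3,9)$, and by the first paragraph it is $\tau$-invariant. I would compute the induced action of $\tau$ on $NS(E\times E)$: it has order three and trace $0$, so its invariants form a rank‑one sublattice, generated by the orbit sum $[E\times\{0\}]+[\{0\}\times E]+[A]$. Setting $\Theta_\s:=\cO\big(E\times\{0\}+\{0\}\times E+A\big)$ and using $\Theta_\s^2=6$ shows that $\Theta_\s$ is a polarisation of type $(1,3)$. Since $\Theta_P|_{P_\s}$ is a $\tau$-invariant polarisation of type $(3,9)=3\cdot(1,3)$, it must coincide with $3\Theta_\s$, which proves the first assertion and identifies $(P_\s,\Theta_\s)$ with the stated model.

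For uniqueness, I would take an arbitrary elliptic curve $E'=\{(ax,bx)\}\subset E\times E$ with $\gcd(a,b)=1$ and compute, by intersecting with the three components, $\Theta_\s\cdot E'=2(a^2+ab+b^2)\ge 2$, with equality exactly for the classes of $E\times\{0\}$, $\{0\}\times E$ and $A$. Since $\Theta_\s^2=6$, any effective divisor in the class of $\Theta_\s$ that is a sum of elliptic curves must consist of exactly three curves, each of minimal $\Theta_\s$-degree $2$; matching numerical classes then forces the divisor to be $E_{\s,j}+E_{\s,j\tau}+E_{\s,j\tau^2}$. Hence these three curves are the unique elliptic curves defining a divisor corresponding to $\Theta_\s$. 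The count of minimal curves again uses $\End(E)=\mathbb{Z}$, so for the CM case I would instead verify directly that no sum of degree‑two elliptic curves other than the three listed lies in the class of $\Theta_\s$.
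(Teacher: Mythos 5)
Your strategy is genuinely different from the paper's: where the paper quotes Ries for the explicit matrix of the $(1,3)$ polarisation on $P(h_\sigma)\simeq E\times E$ and then invokes \cite[Lemma 5.4]{BLiso} (equivalently, the characterisation of the $(1,3)$-theta divisor by the ten $2$-torsion points it passes through, as in the remark following the proposition), you reconstruct everything by hand from the $S_3$-action: you pin down $\tau$ as an explicit order-three matrix, identify the $\tau$-invariant part of $NS(E\times E)$, and prove uniqueness by minimising $\Theta_\sigma\cdot E'=2(a^2+ab+b^2)$. For $\End(E)=\ZZ$ this is correct and self-contained, and it is a nice elementary substitute for the references.

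However, the CM case is a genuine gap, not the routine verification you defer it to be, and it cannot be discarded here because the theorem being proved is \emph{global} (not generic) injectivity. Concretely, if $\End(E)=\ZZ[\omega]$ with $\omega$ a primitive cube root of unity, then for $a,b\in\End(E)$ the degree is $N(a)+N(b)+N(a+b)$, so the curves $E_{1,\omega}=\{(x,\omega x)\}$ and $E_{1,\omega^2}$ have $\Theta_\sigma$-degree $3$, and one checks in the Hermitian-matrix description of $NS(E\times E)$ that
$[E_{1,\omega}]+[E_{1,\omega^2}]=[E\times\{0\}]+[\{0\}\times E]+[A]=[\Theta_\sigma]$.
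Thus there is a second configuration of elliptic curves whose sum lies in the numerical class of $\Theta_\sigma$, and your ``minimal degree plus matching numerical classes'' argument no longer forces the answer. The same degeneration breaks the two earlier steps of your argument: $NS(E\times E)$ has rank $4$, the $\tau$-invariant sublattice can have rank $2$, so $\tau$-invariance alone no longer determines the polarisation class up to a scalar; and the equations $\tau(x,0)=(0,x)$, $\tau^3=\id$ admit further solutions over $\ZZ[\omega]$ (e.g. $d^3=-1$ with $d=-\omega$), for which the third curve in the $\tau$-orbit is $E_{1,\omega^2}$ rather than the antidiagonal. To close the gap you would need an argument that works uniformly, e.g. the paper's route via \cite{BSan}: among symmetric effective divisors in the class, only $E_{\sigma,j}\cup E_{\sigma,j\tau}\cup E_{\sigma,j\tau^2}$ passes through ten $2$-torsion points with odd multiplicity (the competing configuration $E_{1,\omega}\cup E_{1,\omega^2}$ passes through only six), and this characterisation does not see $\End(E)$.
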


\begin{proof}

    It follows from \cite[Theorem 1, page 67]{ries} that $(P(h_\s),\Theta_\s) \simeq (E_{\s,j} \times E_{\s, j}, L)$ with $$\varphi_L = \begin{pmatrix}
        2 & -1 \\ -1 & 2
    \end{pmatrix}.$$
    The first part of the proposition follows from \cite[Lemma 5.4]{BLiso}.
In particular, the curves are uniquely determined by the polarisation. Since $f$ is non-isotropic, the Prym variety $P_{\s}$ is the image of $P(h_{\s})$ by the embedding $k_{\sigma}^*$, hence the same holds for $P_{\sigma}$.  
\end{proof}

\begin{rem}
    One can also see the Proposition \ref{elnoniso} from the perspective of \cite{BSan}. Let $D$ be an effective divisor of arithmetic genus 4 on $P(h_{\s})$ defined as
    $E_{\sigma,j}\cup E_{\sigma,j\tau}\cup E_{\sigma,j\tau^2}$. Since $D$ is the sum of abelian subvarieties of $P(h_{\s})$, it is symmetric. Moreover, every elliptic curve passes through $0 \in P(h_{\s})$ and three $2$-torsion points, so $D$ passes through ten 2-torsion points with odd multiplicity. Combining \cite[Prop 3.3]{BSan} and \cite[Prop 2.5]{BSan}, we see that symmetric effective divisors on $P(h_{\s})$ pass through either six or ten 2-torsions, and there is a unique divisor that passes through ten 2-torsions. Therefore, these properties characterise $(1,3)$-theta divisors, so $D$ is (up to translation) a unique $(1,3)$-theta divisor as defined in \cite{BSan} (see also remark before Section 3.3 therein). As a consequence, the curves are uniquely determined by the polarisation.
\end{rem}

\begin{prop}
\label{eliso}
     Assume $f:\tC \to H$ is isotropic. Then the curves $E_{\sigma,j}, E_{\sigma,j\tau}, E_{\sigma,j\tau^2}$ being images of (distinguished) elliptic curves from $P(h_\s)$ by $k_\s^*$ can be intrinsically distinguished from $(P,\Theta_P)$. 
\end{prop}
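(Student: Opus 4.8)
The plan is to reduce, exactly as in the non-isotropic Proposition \ref{elnoniso}, to a $(1,3)$-polarised abelian surface on which the theta-divisor characterisation of the remark following Proposition \ref{elnoniso} applies, and to reach such a surface by descending the small Prym $P_\sigma$ along a single $3$-torsion point that is visible from $\Theta_P$ alone. First I would collect the intrinsic data. By Proposition \ref{group} the subgroup $\langle\sigma,\tau\rangle$ is recovered from $(P,\Theta_P)$, so the isotypical decomposition of Corollary \ref{decprym} is intrinsic; in particular $P_\sigma$ together with its restricted polarisation is determined by $(P,\Theta_P)$, and by Lemma \ref{typeprym} this polarisation has type $(3,3)$. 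Hence it equals $3\Xi$ for a unique principal polarisation $\Xi$, and $K(\Theta_{P_\sigma}) = P_\sigma[3]$. The three curves we seek are the images $k_\sigma^*(E_{\sigma,j\tau^i}) = \pi(E_{\sigma,j\tau^i})$, where $\pi\colon P(h_\sigma)\to P_\sigma$ is the quotient by $h_\sigma^*(\zeta)$ from Lemma \ref{trivialkernel}. The obstruction to copying Proposition \ref{elnoniso} verbatim is that $k_\sigma^*$ is no longer injective, so the clean $(1,3)$-theta divisor lives on $P(h_\sigma)$, which is not directly visible inside $(P,\Theta_P)$, and on $P_\sigma$ the type has jumped to $(3,3)$.

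To repair this I would descend $P_\sigma$ along one $3$-torsion point. Since $K(\Theta_{P_\sigma}) = P_\sigma[3]$ and the Weil form is alternating, every cyclic subgroup $\langle z\rangle$ with $z\in P_\sigma[3]$ is isotropic, so $\Theta_{P_\sigma}$ descends along $q_z\colon P_\sigma \to Q_z := P_\sigma/\langle z\rangle$ to a polarisation $M_z$ of type $(1,3)$ (the Euler characteristic drops by the factor $\deg q_z = 3$, from $9$ to $3$). For the correct choice of $z$ the images $q_z(\pi(E_{\sigma,j\tau^i}))$ are three elliptic curves whose union is a $(1,3)$-theta divisor on $Q_z$; by the argument in the remark after Proposition \ref{elnoniso} (combining \cite[Prop.~2.5]{BSan} and \cite[Prop.~3.3]{BSan}) this is the unique symmetric theta divisor passing through ten $2$-torsion points, so the three curves are uniquely pinned down by $(Q_z,M_z)$. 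Pulling these components back through $q_z$ and taking identity components then recovers $\pi(E_{\sigma,j\tau^i})\subset P_\sigma$, hence inside $(P,\Theta_P)$.

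The heart of the matter is to choose $z$ canonically, using only $(P,\Theta_P)$. Here I would use the kernel $K(\Theta_P) = f^*JH\cap P$, which is intrinsic and, by the proof of Proposition \ref{group}, consists of $3$-torsion points fixed by $\langle\sigma,\tau\rangle$. Concretely, in the model $P(h_\sigma)\cong E\times E$ with $\varphi_L = \left(\begin{smallmatrix}2&-1\\-1&2\end{smallmatrix}\right)$ from Proposition \ref{elnoniso}, one checks that $K(L)$ is the $3$-torsion of the antidiagonal, so $h_\sigma^*(\zeta)$ lies on the antidiagonal elliptic curve; the primitive $9$-torsion point $\tilde z$ with $3\tilde z = h_\sigma^*(\zeta)$ produced in the proof of Lemma \ref{typeprym} then maps to a well-defined $z = \pi(\tilde z)\in P_\sigma[3]$, and since $k_\sigma^*(\tilde z) = f^*(\zeta')$ for a primitive $9$-torsion preimage $\zeta'$ of $\zeta$, this $z$ in fact lies in the intrinsic subgroup $K(\Theta_P)\cap P_\sigma$. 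Quotienting by it undoes the distortion that $\pi$ introduced on the antidiagonal factor and restores the symmetric union of three elliptic curves as the descended theta divisor.

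The main obstacle I anticipate is exactly this last verification, which splits into two parts. First, one must show that the intrinsically defined element of $K(\Theta_P)\cap P_\sigma$ is the right generator, i.e.\ that it coincides with $\pi(\tilde z)$ and not with a $3$-torsion point that descends to an irreducible (genus-$2$) theta divisor; I expect this to follow from the $\langle\sigma,\tau\rangle$-equivariance of the whole configuration together with the explicit position of $h_\sigma^*(\zeta)$ on the antidiagonal. Second, one must confirm by an intersection computation in the $E\times E$ model that the images $q_z(\pi(E_{\sigma,j\tau^i}))$ genuinely form the $(1,3)$-theta divisor of $(Q_z,M_z)$, so that the \cite{BSan} uniqueness statement applies and the transported curves are precisely $k_\sigma^*(E_{\sigma,j\tau^i})$.
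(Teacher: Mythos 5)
Your overall strategy --- locate an intrinsic $3$-torsion point in $K(\Theta_P)\cap P_\sigma$, pass to a $(1,3)$-polarised surface, and invoke the non-isotropic uniqueness of the three elliptic curves --- is the same as the paper's, but two of your key steps fail as stated. First, your candidate for the canonical point is not the right one: if $\tilde z$ is the primitive $9$-torsion point with $3\tilde z = h_\sigma^*(\zeta)$, then $\tilde z\notin h_\sigma^*JH$, because $h_\sigma^*JH\cap P(h_\sigma)=h_\sigma^*((\ker h_\sigma^*)^{\bot})$ is killed by $3$ while $\tilde z$ has order $9$; consequently $k_\sigma^*(\tilde z)\notin f^*JH$, so $\pi(\tilde z)$ does \emph{not} lie in $K(\Theta_P)\cap P_\sigma$. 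The intrinsic generator is instead $k_\sigma^*(\delta)$, where $\delta$ is a complement of $h_\sigma^*(\zeta)$ inside $K(\Xi)=h_\sigma^*((\ker h_\sigma^*)^{\bot})$, with $\Xi$ the $(1,3)$ polarisation on $P(h_\sigma)$.

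Second, and more seriously, the central claim that for the right $z$ the push-forwards $q_z(\pi(E_{\sigma,j\tau^i}))$ form the $(1,3)$-theta divisor of the descended polarisation $M_z$ is false. With $z$ generating $K(\Theta_P)\cap P_\sigma$, the composite $P(h_\sigma)\to P_\sigma\to Q_z$ is the quotient by the full group $K(\Xi)$ of order $9$, i.e.\ (up to isomorphism) the isogeny $\varphi_\Xi$, and this isogeny distorts the configuration: in the model $P(h_\sigma)\cong E\times E$ with $\varphi_\Xi=\left(\begin{smallmatrix}2&1\\1&2\end{smallmatrix}\right)$ the curves $E\times\{0\}$, $\{0\}\times E$ and the third component map to $\{(2x,x)\}$, $\{(y,2y)\}$ and $\{(t,-t)\}$, which meet pairwise in $9$ points; their union has self-intersection $54=(3M_z)^2$ rather than $6=M_z^2$. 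Hence these images are not the components of a theta divisor of $M_z$ (components of a reducible $(1,3)$-theta divisor meet pairwise in a single point), and the unique $(1,3)$-theta divisor of $(Q_z,M_z)=(\widehat{P(h_\sigma)},\widehat{\Xi})$ consists of three \emph{different} elliptic curves; pulling those back would recover the wrong curves. This distortion is exactly why the paper does not stay on the quotient: it identifies $P_\sigma/\langle\delta\rangle$ with $\widehat{P(h_\sigma)}$, dualises once more to recover $(P(h_\sigma),\Xi)$ itself, applies Proposition \ref{elnoniso} there, and only then transports the three curves to $P_\sigma$ along the dual of the quotient map. You would need to replace your push-forward step by this dualisation (or otherwise undo the action of $\varphi_\Xi$ on the curves) for the argument to close; as written, the two verifications you defer at the end are not merely technical but are where the proof actually breaks.
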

\begin{proof}
Note that by Lemma \ref{trivialkernel}, using its notation, we have  $$h_{\sigma}^*JH \cap P(h_{\sigma}) = h_{\sigma}^*((\ker h_{\sigma}^*)^{\bot})=\left<\delta,
h_{\s}^*(\zeta)\right>,$$ for some $\delta\in h_{\sigma}^*JH[3]$.
Since $f$ is isotropic, Lemma \ref{trivialkernel} implies that $h_{\s}^*(\zeta) \in \ker k_{\s}^*$, so $P_\s \cap f^*JH=\left<\delta\right> \subset J\tC$ is isomorphic to $\ZZ_3$. On the other hand, $K(\Theta_P)=P\cap f^*JH\cong (\ZZ_3)^2$.
Combining these together, we conclude that $K(\Theta_P)\cap P_\s=P_\s \cap f^*JH=\left<\delta\right>\cong\ZZ_3$.

Now, using results from the non-isotropic case, we know that $P(h_{\sigma})$ with the restricted polarisation $\Xi$ is a $(1,3)$ polarised abelian surface with three distinguished elliptic curves. Note that its dual $\widehat{P(h_{\sigma})}=P(h_{\sigma})/K(\Xi)=P_\s/\langle \delta \rangle$. Hence, starting from $P_\s$ and $\delta$, we find the dual to $P_\s/\delta$ with the unique map to $P_\s$ (that is dual to the quotient by $\delta$) and $E_{\sigma,j}, E_{\sigma,j\tau}, E_{\sigma,j\tau^2}$ are uniquely defined as images of distinguished curves from $P(h_{\sigma})$. 
\end{proof}

In the following remark, we will see a different way of distinguishing elliptic curves for a general isotropic Prym. 

\begin{rem}
Recall that $P(h_{\sigma})\subset JC_\s$ is a $(1,3)$ polarised surface with the polarisation defined in Proposition \ref{elnoniso}. If we take the curve $H$ to be general, we can assume that the elliptic curve $E$ has $j$-invariant different from $0$. Now, since we are in the isotropic case,  $P_\s$ is isomorphic to $P(h_{\sigma})/\langle h_\s^*(\zeta) \rangle$. Using \cite[Prop 3.6]{BLExE}, we get that $P_\s=(J(X),3\Theta)$ is isomorphic to a Jacobian of a genus 2 curve $X$ with triple principal polarisation. Moreover, note that $\tau$ is an automorphism of order $3$ on $JC_\s$ fixing $h_\s^*(\zeta)$, hence it is also an automorphism on $JX$. Similarly, $j$ (viewed as a lift of the hyperelliptic involution of $H$ to $C_\s$) descends to an involution on $X$. In this way we conclude that $X$ is Bolza's case III curve (see \cite[p.340]{BL} or \cite{Bolza}).  
Thus, a general $X$ has an automorphism group isomorphic to $D_6$. Now, we can see that $X$ has 7 involutions, namely, the hyperelliptic involution $\iota$ and 3 pairs of involutions ($j,j\iota)$. We saw that each involution on $P_\s = JX$ either fixes $\delta$ or acts as $(-1)$. We denote by $j, j\tau, j\tau^2$ precisely three (non-hyperelliptic) involutions that acts on $\delta$ as $(-1)$ and by $E_{\sigma,j}, E_{\sigma,j\tau}, E_{\sigma,j\tau^2}$ we denote their fixed points set (that are elliptic curves since the involutions acts on a genus 2 curve, the quotient curves have to have genus 1).  Hence, we are able to distinguish three elliptic curves.

Note that the isotropic case is more difficult due to the following fact. In the non-isotropic case, one can easily distinguish between involutions $j$ and $-j$ by computing the restricted polarisation types of their fixed point sets. In particular, one of them has type $2$ while the other has type $6$. However, in the isotropic case, since we have to divide by a 3-torsion point, both restricted polarisation types are $2$. This is the reason why in the proof one has to consider the quotient surface and its dual together with the results from the non-isotropic case.
\end{rem}

\vspace{3mm}

Now, we would like to show a correspondence between subsets of elliptic curves inside the Prym variety and involutions.
Note that the involutions $j\sigma^a\tau^b$ of $J\tC$ induce involutions on $P(f)$ and on each small Prym, which we denote by the same letters. We denote the set of nine involutions by 
$$M=\{j\sigma^a\tau^b|_{P(f)}: a,b\in\{0,1,2\}\}$$
We want to show that $M$ can be recovered from $P(f)$. 
Denote by $$\cE=\{E_{\sigma,j}, E_{\sigma,j\tau}, E_{\sigma,j\tau^2},\ E_{\tau,j}, E_{\tau,j\s}, E_{\tau,j\s^2}, \ E_{\st,j}, E_{\st,j\tau}, E_{\st,j\tau^2}, \ E_{\st^2,j}, E_{\st^2,j\tau}, E_{\st^2,j\tau^2}\}$$
the set of distinguished elliptic curves, which can be recovered using the Propositions \ref{elnoniso} and \ref{eliso}.

We will focus on the curve $E_{\s,j}$; analogous results follow for other elliptic curves by symmetry. Note that the symmetric idempotent of $E_{\sigma, j}$ in $P(f)$ is $$\varepsilon_{\sigma,j} = \frac{(1+j)(1+\sigma + \sigma^2)}{6}.$$ Therefore, the involutions $j, j\sigma$ and $j\sigma^2$ act as the identity on $E_{\sigma,j}$ and as $(-1)$ on its complementary abelian subvariety $E_{\sigma, j}^c$ in $P_{\sigma}$. Analogously, for any other elliptic curve $E\in\cE$ there are 3 involutions from $M$ that act as the identity on it. Moreover, each involution acts as the identity on precisely 4 elliptic curves, each one in every small Prym. Lastly, by simple counting, taking any two small Pryms and an elliptic curve inside each of them, there exists a unique involution from $M$ that acts as the identity on the chosen elliptic curves. To summarise, we have the following correspondences that we will use frequently in the proofs.

\begin{correspondence}
\label{correspondence}
\begin{align*}
    M &\ \ \ \ \ \ \cE\\
    \text{an involution} &\longrightarrow \text{identity on 1 elliptic curve in each small Prym}\\
    \text{3 involutions that are identity on it} &\longleftarrow \text{an elliptic curve}\\
    \text{a unique involution that is identity on them} &\longleftarrow \text{any 2 elliptic curves (not in the same triple)}
    \end{align*}
\end{correspondence}
Lastly, taking an involution, say $j\tau$, and a chosen triple in a small Prym, say $E_{\sigma,j}, E_{\sigma,j\tau}, E_{\sigma,j\tau^2}$, the involution acts as an identity on one of the curves and we can prove that it swaps the other curves. Since $j\tau(1+j\tau)=(1+j\tau)$, we get that $j\tau(\varepsilon_{{\s, j\tau}})=\varepsilon_{{\s,j\tau}}$. Let us perform a necessary computation to check the swapping:
$$j\tau(6\varepsilon_{{\s,j}}) = (j\tau + \tau^2)(1+\s+\s^2)=(j\tau^2+1)\tau^2(1+\s+\s^2)=(1+j\tau^2)(1+\s+\s^2)(\tau^2)=(6\varepsilon_{{\s,j\tau^2}})(\tau^2)$$

Now, we are ready to make the following definition.
\begin{defn}
    We say that an involution $q \in \Aut(P(f))$ is $\langle\sigma\rangle$-compatible if there exist $a,b$ such that $q|_{P_{\sigma}} = j\sigma^a\tau^b|_{P_{\sigma}}$. Analogously, we define $\langle\tau\rangle$-compatibility, $\langle\sigma\tau\rangle$-compatibility and $\langle\sigma^2\tau\rangle$-compatibility by taking other small Pryms.
    
    We say that an involution $q \in \Aut(P(f))$ is group-compatible if it is $\langle\sigma\rangle$-compatible, $\langle\tau\rangle$-compatible, $\langle\sigma\tau\rangle$-compatible and $\langle\sigma^2\tau\rangle$-compatible.
\end{defn}

Note that an involution $q$ is $\langle \sigma \rangle$-compatible if and only if it acts as the identity on either $E_{\sigma,j}, E_{\sigma,j\tau}$ or $E_{\sigma,j\tau^2}$ and as $(-1)$ on the corresponding complementary curve.
It follows from the definition that all the involutions in $M$ are group-compatible. Below, we will show that these are the only involutions on $P(f)$ with this property.

\begin{prop}
\label{global}
    Let $q$ be a group-compatible involution on $P(f)$. Then $q \in M$.
\end{prop}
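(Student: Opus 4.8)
The strategy is to use an element of $M$ to reduce the statement to Proposition \ref{group}, rather than to attack the four small Pryms separately.

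\textbf{Coordinates and choice of $m$.} Since $C_\alpha = \tC/\langle\alpha\rangle$, the automorphism $\alpha$ acts as the identity on $P_\alpha = k_\alpha^*P(h_\alpha)$. Hence $j\sigma^a\tau^b|_{P_\alpha}$ depends only on the coset of $\langle\alpha\rangle$, so each small Prym sees exactly three of the nine involutions of $M$; on $P_\sigma$ these are $j|_{P_\sigma}, j\tau|_{P_\sigma}, j\tau^2|_{P_\sigma}$, and similarly for the others. Thus a group-compatible $q$ determines one exponent in $\mathbb{Z}_3$ for each small Prym, and a direct check shows that the element $j\sigma^a\tau^b\in M$ has exponents $(b,a,a-b,a+b)$; so $q\in M$ exactly when two affine relations among the four exponents hold. (Combinatorially these four choices form one line from each of the four parallel classes of $AG(2,\mathbb{F}_3)$, and $q\in M$ is the concurrency of these lines.) Let $m = j\sigma^{a}\tau^{b}\in M$ be the unique element with $m|_{P_\sigma} = q|_{P_\sigma}$ and $m|_{P_\tau} = q|_{P_\tau}$.

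\textbf{Reduction.} Put $r := q\circ m$. Note that $q$ preserves $\Theta_P$: it preserves each $P_\alpha$ and restricts there to a polarised automorphism, and distinct small Pryms are orthogonal for $\Theta_P$, so $q\in\Aut(P,\Theta_P)$; hence also $r\in\Aut(P,\Theta_P)$. By construction $r$ is the identity on $P_\sigma$ and on $P_\tau$, and restricts to a power $\sigma^{k_3}$ on $P_{\sigma\tau}$ and $\sigma^{k_4}$ on $P_{\sigma^2\tau}$; in particular $r$ has order $1$ or $3$. It suffices to prove $r = \id$, since then $q = m\in M$.

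\textbf{The key point: $r\in N$.} I claim $r$ fixes $K(\Theta_P)$ pointwise. The hyperelliptic involution acts as $-1$ on $JH$, so every lift $j\sigma^a\tau^b$ of it acts as $-1$ on $f^*JH$, hence as $-1$ on $K(\Theta_P) = f^*JH\cap P$; therefore $m|_{K(\Theta_P)} = -1$. For $q$ one uses that $K(\Theta_P)$ is generated by the subgroups $f^*JH\cap P_\alpha$ compatibly with the isotypical decomposition $P = k_\sigma^*P(h_\sigma)\boxplus\cdots\boxplus k_{\sigma^2\tau}^*P(h_{\sigma^2\tau})$: writing $x\in K(\Theta_P)$ as $\sum_\alpha x_\alpha$ with $x_\alpha\in f^*JH\cap P_\alpha$, and using that $q|_{P_\alpha}$ is one of the lifts $j\sigma^{a_\alpha}\tau^{b_\alpha}$ and so acts as $-1$ on $f^*JH\cap P_\alpha$, one gets $q(x) = \sum_\alpha q|_{P_\alpha}(x_\alpha) = -x$. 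Thus $q|_{K(\Theta_P)} = -1$, whence $r = q\circ m$ acts as $(-1)(-1) = \id$ on $K(\Theta_P)$. As $r$ also has odd order, $r\in N$.

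\textbf{Conclusion and the main obstacle.} By Proposition \ref{group}, $N = \langle\sigma,\tau\rangle$, so $r = \sigma^a\tau^b$ for some $a,b$. Since $\sigma$ acts trivially on $P_\sigma$ while $\tau$ has order $3$ there, $r|_{P_\sigma} = \tau^b|_{P_\sigma} = \id$ forces $b\equiv 0$; symmetrically $r|_{P_\tau} = \sigma^a|_{P_\tau} = \id$ forces $a\equiv 0$. Hence $r = \id$ and $q = m\in M$. The delicate step is the claim in the previous paragraph that $q$ acts as $-1$ on all of $K(\Theta_P)$, equivalently that $K(\Theta_P)$ splits as $\bigoplus_\alpha(f^*JH\cap P_\alpha)$ compatibly with the isotypical decomposition. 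This is exactly the gluing data encoded in $\ker\mu$, controlled by the degree computation of Lemma \ref{degree}, and it is the single place where the isotropic and non-isotropic cases must be treated separately, in the spirit of Lemma \ref{degree} and Proposition \ref{eliso}.
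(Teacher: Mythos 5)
Your overall strategy --- compose $q$ with the element $m\in M$ that agrees with it on $P_\sigma$ and $P_\tau$, show that $r=q\circ m$ lies in the group $N$ of Proposition \ref{group}, and then kill $r$ by restricting to the small Pryms --- is genuinely different from the paper's argument and would be an elegant reduction if it worked. But the step you yourself flag as delicate is not merely delicate: it is false as stated, at least in the non-isotropic case. You need $K(\Theta_P)$ to be generated by the subgroups $f^*JH\cap P_\alpha$. Each of these equals $k_\alpha^*\bigl(P(h_\alpha)\cap h_\alpha^*JH\bigr)$, a quotient of $(\ZZ_3)^2$, hence killed by $3$; so their sum lies in the $3$-torsion subgroup $K(\Theta_P)[3]$. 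In the non-isotropic case, however, $K(\Theta_P)\cong\ZZ_9\times\ZZ_9$ by Lemma \ref{polariz}, which contains elements of order $9$, so $\sum_\alpha\bigl(f^*JH\cap P_\alpha\bigr)\subseteq K(\Theta_P)[3]\subsetneq K(\Theta_P)$ and your computation $q(x)=-x$ only covers the $3$-torsion part. Without knowing that $q$ acts as $-1$ on all of $K(\Theta_P)$ you cannot place $r$ in $N$ and invoke Proposition \ref{group}; and proving that $q$ acts as $-1$ on all of $K(\Theta_P)\subset f^*JH$ essentially amounts to knowing that the four restrictions $q|_{P_\alpha}$ glue to a single element of $M$, which is the statement to be proved. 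Your appeal to Lemma \ref{degree} does not repair this: that lemma computes $\deg\mu$, but it provides no splitting of $K(\Theta_P)$ compatible with the isotypical decomposition, and no such splitting exists here for order reasons.

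For comparison, the paper avoids the issue entirely. Assuming $q\notin M$, so that $q=j$ on $P_\sigma+P_\tau$ and $q=j\sigma^a\tau^b$ with $\sigma^a\tau^b\neq\id$ on $P_{\sigma\tau}+P_{\sigma^2\tau}$, it shows that every $(x_1,x_2,x_3,x_4)\in\ker\mu$ must have all coordinates in $\Fix(\sigma,\tau)\cap P_\alpha=f^*JH\cap P_\alpha$, whence $\lvert\ker\mu\rvert\le 3^8$ (resp.\ $3^4$), contradicting $\deg\mu=3^{10}$ (resp.\ $3^6$) from Lemma \ref{degree}. Your preliminary observations (the exponent vector $(b,a,a-b,a+b)$, the fact that $q$ preserves $\Theta_P$, and the final step forcing $a\equiv b\equiv 0$) are all correct and could be kept, but to salvage the reduction to Proposition \ref{group} you would need an independent argument that $r$ fixes $K(\Theta_P)$ pointwise; that is the real content of the proposition.
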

    
\begin{proof} 
    
    Let $q \in \Aut(P(f))$ be a group-compatible involution. 
    Since $q$ is $\langle \sigma \rangle$-compatible and $\langle \tau \rangle$-compatible without loss of generality we can assume  that $q$ acts as the identity on $E_{\sigma, j} \subset P_{\sigma}$ and $E_{\tau, j} \subset P_{\tau}$. In particular $q=j$ on $P_{\sigma}+P_{\tau}$ (where $+$ denotes an algebraic sum of abelian subvarieties).

   By the correspondence \ref{correspondence}, for any two elliptic curves in  $P_{\sigma\tau}$ and $P_{\sigma^2\tau}$ there is a unique involution $j\sigma^a\tau^b$ which acts as the identity on both curves. Hence $q=j\s^a\tau^b$ on $P_{\sigma\tau}+P_{\s^2\tau}$ for some $a,b$.
   
      Thus, from the isotypical decomposition of $P(f)$ (Corollary \ref{decprym}) we have the following equality for $x_1 \in P_{\sigma},\ x_2 \in P_{\tau},\ x_3 \in P_{\sigma\tau},\ x_4 \in P_{\sigma^2\tau}$: $$q(x_1 + x_2 + x_3 + x_4) = j(x_1 + x_2) + j\sigma^a\tau^b(x_3 + x_4).$$ 
      
      Assume that $q \notin M$, hence $\sigma^a\tau^b \neq \id$. We will show that this assumption contradicts Lemma \ref{degree} by bounding the cardinality of the kernel of the addition map $\mu$ from above. Take a tuple $(x_1, x_2, x_3, x_4) \in P_{\sigma} \times P_{\tau} \times P_{\sigma\tau}\times P_{\sigma^2\tau}$ such that $x_1 + x_2 + x_3 + x_4 = 0$. Then $0 = j(x_1 + x_2) + j\sigma^a\tau^b(-x_1 - x_2),$ so that $x_1 + x_2 = \sigma^a\tau^b(x_1 + x_2)$. Analogously, $x_3 + x_4 = \sigma^a\tau^b(x_3 + x_4)$.
    Since one of the small Pryms is fixed by $\sigma^a\tau^b$, without loss of generality   we can assume $\sigma^a\tau^b = \sigma$, which implies that $\sigma(x_2) = x_2$.
    Therefore, for any $(x_1, x_2, x_3, x_4)$ in the kernel of the addition map $$\mu: P_{\sigma} \times P_{\tau} \times P_{\sigma\tau}\times P_{\sigma^2\tau} \to P(f)$$ we have $x_2 \in \Fix(\sigma, \tau)$.  Note that $x_3 = \sigma(\sigma^2(x_3))$ and, since $\sigma^2(x_3) \in P_{\tau}$ we have $\sigma^2(x_3) \in \Fix(\s)$ which implies that $x_3 \in \Fix(\sigma)$. Similarly, we get $\{x_1, x_2, x_3, x_4\} \in \Fix(\sigma, \tau)$. Now, $$P_{\sigma} \cap \Fix(\sigma, \tau) = P_{\sigma} \cap f^*JH = k_{\sigma}^*P(h_{\sigma}) \cap k_{\s}^*h_{\s}^*JH = k_{\s}^*(P(h_\s) \cap h_{\s}^*JH),$$ where the last equality follows from the fact that $\ker k_{\sigma}^{*}|_{P(h_{\s})} \subset P(h_\s) \cap h_{\s}^*JH$ in both isotropic and non-isotropic cases. Note that $P(h_\s) \cap h_{\s}^*JH \simeq \mathbb{Z}_3 \times \mathbb{Z}_3$ since the restricted polarisation from $JC_{\sigma}$ to $P(h_{\sigma})$ is of type $(1,3)$. It follows from Lemma \ref{kernel} that in the non-isotropic case $P_{\sigma} \cap \Fix(\sigma, \tau) \simeq \mathbb{Z}_3 \times \mathbb{Z}_3$ and in the isotropic case $P_{\sigma} \cap \Fix(\sigma, \tau) \simeq \mathbb{Z}_3$. 
    
    It follows from the above that in the non-isotropic (resp. isotropic) case the cardinality of $\ker \mu$ is at most $3^8$ (resp. $3^4$), which contradicts Lemma \ref{degree}. Therefore, $q$ must be in $M$.

    \end{proof}

    \begin{defn}
        We say that a 4-tuple of elliptic curves $(E_1, E_2, E_3, E_4)$ with $E_1 \in P_{\sigma},\ E_2 \in P_{\tau},\ E_3 \in P_{\sigma\tau},\ E_4 \in P_{\sigma^2\tau}$ is compatible if there is a group-compatible involution $q$ (in other words, $q \in M$) that acts as the identity on $E_i$ for each $i$.
    \end{defn}
    For instance, the 4-tuple ($E_{\sigma, j}, E_{\tau, j}, E_{\sigma\tau, j}, E_{\sigma\tau^2,j}$) is compatible. 
    \begin{rem}
    \label{compatible4}
        There are precisely nine compatible 4-tuples. It follows from Proposition \ref{rozklad} that each compatible 4-tuple generates the Jacobian $JC_{ab}$ inside $P(f)$ for some $a,b$ and there is a one-to-one correspondence between compatible 4-tuples and Jacobians $JC_{ab}$ inside $P(f)$.
    \end{rem}

\vspace{3mm}

We are finally ready to give the proof of the main theorem.

\begin{thm}\label{mainthm}
    The Prym maps $$\mathcal{P}^{iso}:(\mathcal{R}_2^{\mathbb{Z}_3\times \mathbb{Z}_3})^{iso} \to \cA^{(1,1,1,1,1,1,3,3)}_8$$ 
    $$\mathcal{P}^{ni}:(\mathcal{R}_2^{\mathbb{Z}_3\times \mathbb{Z}_3})^{ni} \to\cA^{(1,1,1,1,1,1,1,9)}_8$$
    are injective.
\end{thm}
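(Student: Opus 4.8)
The plan is to prove injectivity by showing that the entire covering datum $[f:\tC\to H]\in\mathcal{R}_2^{\mathbb{Z}_3\times \mathbb{Z}_3}$ can be reconstructed \emph{canonically} from the polarised Prym $(P,\Theta_P)$, so that each fibre of $\mathcal{P}^{iso}$ and of $\mathcal{P}^{ni}$ consists of a single point. The reconstruction is an assembly of the intrinsic data produced in the preceding results. First I would apply Proposition \ref{group} to recover the Galois group $N=\langle\sigma,\tau\rangle$ directly from $\Theta_P$, and then use its action to produce the isotypical decomposition $P=P_\sigma\boxplus P_\tau\boxplus P_{\sigma\tau}\boxplus P_{\sigma^2\tau}$ of Corollary \ref{decprym}. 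Propositions \ref{elnoniso} and \ref{eliso} then single out the twelve elliptic curves $\cE$ (three inside each small Prym) intrinsically from $(P,\Theta_P)$; this is the only step at which the isotropic and non-isotropic cases genuinely diverge, and both are already settled.

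Next I would recover the nine involutions $M$. By Proposition \ref{global} these are exactly the group-compatible involutions, a condition expressed entirely through the recovered curves $\cE$ and the action on their complementary subvarieties via Correspondence \ref{correspondence}, so $M$ is intrinsic. With $M$ available, Remark \ref{compatible4} produces the nine compatible $4$-tuples, and each such $4$-tuple generates an abelian subvariety of $P(f)$ equal to the image $\pi_{ab}^*(JC_{ab})$ for one of the nine genus $4$ curves $C_{ab}$.

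The heart of the remaining argument is to turn each of these abelian subvarieties back into a curve together with its maps to elliptic curves. Since $\pi_{ab}$ is a ramified double covering, $\pi_{ab}^*$ is injective, and the restriction of $\Tilde{\Theta}$ to $\pi_{ab}^*(JC_{ab})$ equals twice the principal polarisation $2\Theta_{C_{ab}}$; this follows from the adjointness $\widehat{\pi_{ab}^*}=\Nm_{\pi_{ab}}$ together with $\Nm_{\pi_{ab}}\circ\pi_{ab}^*=2\cdot\id$. Halving this polarisation recovers the principal polarisation on $JC_{ab}$, whence the Torelli theorem returns the curve $C_{ab}$. The four elliptic curves of a compatible $4$-tuple sit inside $JC_{ab}$ as abelian subvarieties and, via the standard correspondence between elliptic subvarieties of a Jacobian and coverings onto elliptic curves, they yield the four non-Galois degree $3$ coverings $l_\alpha^{ab}:C_{ab}\to E_{\alpha,j}$.

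Finally I would reconstruct $\tC$ itself. For a fixed $C_{00}$, the Galois closure of the degree $3$ covering $C_{00}\to E_{\sigma,j}$ is an $S_3$-covering whose total space is $\tC$, recovering $\tC$ together with the order-$3$ automorphism $\sigma$ and the lift $j$ of the hyperelliptic involution; repeating with $C_{00}\to E_{\tau,j}$ yields $\tau$ sharing the same $j$, so the full action of $\langle\sigma,\tau\rangle$ on $\tC$ is recovered. Then $H=\tC/\langle\sigma,\tau\rangle$ and $\ker f^*=\langle\eta,\zeta\rangle\subset JH[3]$ are determined, pinning down the class $[f]$ in both moduli components. Since every step is canonical in $(P,\Theta_P)$, injectivity follows in both cases. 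I expect the main obstacle to lie precisely in this last stage: checking that the Torelli reconstruction of $C_{ab}$ combined with the elliptic-subvariety-to-covering correspondence yields genuinely the geometric coverings $l_\alpha^{ab}$ (rather than merely isogenous ones), and that the Galois closures of the various $C_{00}\to E_{\alpha,j}$ glue consistently into a single curve $\tC$ carrying the full group action.
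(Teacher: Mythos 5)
Your proposal is correct and follows essentially the same route as the paper's proof: recover the group action via Proposition \ref{group}, the twelve elliptic curves via Propositions \ref{elnoniso} and \ref{eliso}, the involutions and compatible $4$-tuples via Proposition \ref{global} and Remark \ref{compatible4}, then pass through Torelli to the genus $4$ curves $C_{ab}$ and take Galois closures of the degree $3$ coverings onto the elliptic curves to rebuild $\tC$ with its SmallGroup$(18,4)$ action and hence $f$. The final consistency issue you flag is handled in the paper exactly as you suggest (all nine $4$-tuples yield the same covering, and the $\mathbb{Z}_3\times\mathbb{Z}_3$ subgroup giving $P(f)=P$ is unique), and the paper likewise notes the non-constructive use of the inverse Torelli map.
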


\begin{proof}
    
Let $$\mathcal{P}:(\mathcal{R}_2^{\mathbb{Z}_3\times \mathbb{Z}_3}) \to \cA^{(1,1,1,1,1,1,3,3)}_8 \cup \cA^{(1,1,1,1,1,1,1,9)}_8$$ be the global Prym map and $(P,\Theta_P)$ be an element of $\im\mathcal{P}$.
Fix a covering $f: \tC \to H$ with $P(f) \cong P$. By Proposition \ref{group} we can recover the action of $\langle \sigma, \tau \rangle \simeq \mathbb{Z}_3 \times \mathbb{Z}_3$ on $P$ induced by $\Gal(\tC/H)$ from the polarisation $\Theta_P$. According to the proof of Proposition \ref{rozklad} the isotypical decomposition of $P$ is as follows:
$$P = P_{\sigma} \boxplus P_{\tau} \boxplus P_{\sigma\tau} \boxplus P_{\sigma^2\tau},$$
where $P_{\sigma}$ is a subvariety of $P$ with a symmetric idempotent $$\varepsilon_{P_{\sigma}} = \frac{(1 + \sigma + \sigma^2)(2 - \tau - \tau^2)}{9},$$ and analogously for other components. Our aim is to show that we can recover $f$ uniquely from $P$.

It follows from Proposition \ref{elnoniso} in the non-isotropic case and Proposition \ref{eliso} in the isotropic case that the restricted polarisation uniquely determines three elliptic curves inside each isotypical component of $P$. We denote the elliptic curves inside $P_{\sigma}$ by $E_{\sigma}^1, E_{\sigma}^2$ and $E_{\sigma}^3$. Now, define involutions $j_{\sigma}^i$ on $P_{\sigma}$ given by the identity on $E_{\sigma}^i$ and $(-1)$ on its complementary abelian subvariety $(E_{\sigma}^1)^c$ inside $P_{\sigma}$. We define analogous involutions on other isotypical components.

It follows from Proposition \ref{global} that there exists an involution on $P$ that restricts to the involutions $j_{\sigma}^{i_1}, j_{\tau}^{i_2}, j_{\sigma\tau}^{i_3}, j_{\sigma^2\tau}^{i_4}$ on the corresponding components if and only if $(E_{\sigma}^{i_1}, E_{\tau}^{i_2}, E_{\sigma\tau}^{i_3}, E_{\sigma^2\tau}^{i_4})$ is a compatible 4-tuple for the covering $f$. Therefore, we can recover nine compatible 4-tuples for $f$.
Moreover, Proposition \ref{global} also shows that compatible 4-tuples are the same for any coverings $f_1$ and $f_2$ with $P(f_1) \cong P(f_2) \cong P$. Therefore, it remains to show that compatible 4-tuples determine the covering uniquely.  

Fix a compatible 4-tuple $(E_1, E_2, E_3, E_4)$ and consider an abelian subvariety $J$ of $P$ generated by $E_1, E_2, E_3, E_4$. Note that the restricted polarisation $\Theta_P|_J$ is twice the principal polarisation, and it follows from Remark \ref{compatible4} that $J$ is a Jacobian of a genus 4 curve $C$ corresponding to one of the curves $C_{kl}$. The embedding of $E_i$ in $JC$, yields a covering $C\to E_i$ and by taking its Galois closure, we recover the curve $\tC$ with the $S_3$ deck group of automorphisms. By taking all $i$, we see that automorphisms generate SmallGroup(18,4) and there is a unique $\mathbb{Z}_3 \times \mathbb{Z}_3$ subgroup such that the quotient $f:\tC \to H$ has the property $P(f) = P$. Note that all nine compatible 4-tuples give the same covering $f$ by Remark \ref{compatible4}, hence the Prym map is injective.
\end{proof}
\begin{rem}
    Our proof is not constructive in the sense that we use the inverse of the Torelli map to find genus $4$ curves $C_{ab}$.
\end{rem}

Finally, using results from \cite{BLiso}, we can find a birational model of $(\cR_2^{\ZZ_3\times\ZZ_3})^{ni}$ with the following proposition.
\begin{prop}
    The moduli $(\cR_2^{\ZZ_3\times\ZZ_3})^{ni}$ can be embedded as an open dense subset of the moduli of $(1,9)$-polarised abelian surfaces $\cA_2^{{1,9}}$.
\end{prop}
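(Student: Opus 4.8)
The plan is to write down an explicit morphism
$$\Psi:(\cR_2^{\ZZ_3\times\ZZ_3})^{ni}\longrightarrow \cA_2^{(1,9)},\qquad
\Psi\big(H,\langle\eta,\zeta\rangle\big)=\big(JH/\langle\eta,\zeta\rangle,\,M\big),$$
prove it is injective by exhibiting an inverse construction, and then compare dimensions to upgrade injectivity to an open dense embedding. Here $M$ is the polarisation descended from $9\Theta_H$. First I would check that $\Psi$ is well defined and lands in $\cA_2^{(1,9)}$. Writing $K=\langle\eta,\zeta\rangle=\ker f^*\subset JH[3]$, the point is that any two $3$-torsion points pair trivially under the $9$-Weil form, so $K$ is isotropic for $e^{9\Theta_H}$ and $9\Theta_H$ descends to $JH/K$. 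The type of $M$ is then computed exactly as in Lemma \ref{polariz}, via $K(M)\simeq K^{\perp}/K$ with the orthogonal taken inside $JH[9]=K(9\Theta_H)$: non-isotropy of $K$ for the $3$-Weil form forces $K(M)\simeq\ZZ_3^2$ to be replaced by $K(M)\simeq\ZZ_9^2$, i.e. type $(1,9)$ rather than $(3,3)$. Equivalently, $JH/K\simeq f^*JH$ is the complementary abelian subvariety of $P(f)$ in $J\tC$ by Proposition \ref{rozklad}, whence $K(M)\simeq K(\Theta_P)\simeq\ZZ_9^2$ by the standard identification of kernels of restricted polarisations on complementary subvarieties.

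Next I would prove injectivity by reconstructing $(H,K)$ from $(B,M):=\Psi(H,\langle\eta,\zeta\rangle)$. The subgroup $3K(M)\subset B$ is canonical and isomorphic to $\ZZ_3^2$; let $\pi':B\to B/3K(M)=:A$ be the quotient. One checks that $3K(M)$ is isotropic for $e^M$ and that $\pi'^{*}\Theta_A=M$, so $A$ carries a canonical principal polarisation $\Theta_A$. A short computation then identifies $A$ with $JH$ and the canonical subgroup $\pi'(B[3])\subset A$ with $K$: indeed $\pi'$ is the complementary isogeny of the original quotient $\pi:JH\to B$, characterised by $\pi'\pi=[3]_{JH}$, and this lift is unique since $\mathrm{Hom}(JH,3K(M))=0$. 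Applying the Torelli theorem to $A\simeq JH$ recovers $H$, and the pair $(A,\pi'(B[3]))$ recovers $\langle\eta,\zeta\rangle$. Hence $\Psi$ is injective.

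Finally, both $(\cR_2^{\ZZ_3\times\ZZ_3})^{ni}$ and $\cA_2^{(1,9)}$ are irreducible of dimension $3$ (the forgetful map to $\mathcal{M}_2$ is finite, and $\dim\cA_2^{(1,9)}=\dim\mathfrak{H}_2=3$), so the injective morphism $\Psi$ has irreducible $3$-dimensional, hence dense, image. To promote this to an open dense embedding I would realise the reconstruction above as an actual inverse morphism: the assignment $(B,M)\mapsto(B/3K(M),\pi'(B[3]))$ is induced by the subgroup scheme $3K(M)\subset B$ over the moduli space and is therefore a morphism on $\cA_2^{(1,9)}$, inverse to $\Psi$ wherever $B/3K(M)$ is a Jacobian and $\pi'(B[3])$ is non-isotropic.

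The main obstacle I anticipate lies precisely in this last step: one must show that the locus where the inverse construction is valid is open and dense, i.e. that the image of $\Psi$ consists exactly of those $(1,9)$-surfaces whose canonical quotient $B/3K(M)$ is a Jacobian of a genus $2$ curve carrying a non-isotropic $\ZZ_3\times\ZZ_3$ of $3$-torsion, and that nothing pathological happens along the boundary (products of elliptic curves, or surfaces where $\pi'(B[3])$ degenerates). This is where I would invoke the structural results of \cite{BLiso} on $(1,9)$-polarised abelian surfaces, which should control the canonical quotient $B\mapsto B/3K(M)$ and certify that $\Psi$ is an open immersion, thereby identifying $(\cR_2^{\ZZ_3\times\ZZ_3})^{ni}$ with an open dense subset of $\cA_2^{(1,9)}$.
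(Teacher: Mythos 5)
Your argument is essentially correct and reaches the statement by a genuinely different route from the paper. The paper passes to the dual surface $\widehat{f^*JH}$ and uses \cite[Prop 4.3]{BLiso} to realise $\tC$ itself as a polarising divisor of arithmetic genus $10$ inside that $(1,9)$-surface, so that $f$ is literally the restriction of the quotient by the unique isotropic $\ZZ_3\times\ZZ_3$ inside $K(\Theta_{\widehat{f^*JH}})\cong\ZZ_9\times\ZZ_9$; the inverse construction then takes the preimage of $H$ inside the surface. You instead work with $f^*JH=JH/\langle\eta,\zeta\rangle$ directly and never see the curve in the surface: you recover $(JH,\Theta_H)$ as the quotient of $B$ by $3K(M)$ with the descended principal polarisation, identify $\pi'\circ\pi$ with $[3]_{JH}$, read off the subgroup as $\pi'(B[3])$, and invoke Torelli. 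This is more elementary (only descent of polarisations and Weil pairings are needed) and makes injectivity completely transparent; what it loses is the geometric picture of $\tC$ as a $(1,9)$-theta divisor, which is the actual content of the paper's appeal to \cite{BLiso} and its link to \cite{HS}. Your individual computations all check out: $K$ is isotropic for $e^{9\Theta_H}$, $K(M)\cong\ZZ_9^2$ in the non-isotropic case, $(3K(M))^{\perp}=3K(M)$ inside $K(M)$ so the descended polarisation is principal, $\ker(\pi'\circ\pi)=JH[3]$, and $\pi'(B[3])$ maps to $K$ under the resulting identification.

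The one place you stop short is the final openness/density step, and there \cite{BLiso} is not really the tool you need; the gap closes with two elementary observations. First, $(B,M)\mapsto(B/3K(M),\Theta_A)$ defines a morphism $q:\cA_2^{(1,9)}\to\cA_2$ which is dominant because $q\circ\Psi$ hits every Jacobian carrying a non-isotropic $\ZZ_3\times\ZZ_3$, i.e. all of $\cJ_2$; hence $q^{-1}(\cJ_2)$ is open and dense. Second, $\pi'(B[3])\subset A[3]$ is always isomorphic to $\ZZ_3^2$, and the isotropy type of the Weil pairing restricted to it is locally constant on the connected space $\cA_2^{(1,9)}$, hence constantly non-isotropic since it is so on $\im\Psi$. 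Together these give $\im\Psi=q^{-1}(\cJ_2)$ and show that your inverse construction is a morphism there, which completes the proof.
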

\begin{proof}
Note that we have shown that for a $\ZZ_3\times\ZZ_3$ covering $f$, we get that $f^*JH=JH/\left<\eta,\zeta\right>$ is a $(1,9)$ polarised abelian surface embedded in $J\tC$. One can consider a dual map to the embedding, called $\varphi:J\tC\to\widehat{f^*JH}$ and its restriction to $\tC\subset J\tC$. By \cite[Prop 4.3]{BLiso}, we see that $\varphi(\tC)\subset \widehat{f^*JH}$ defines the dual $(1,9)$ polarisation and therefore, being of arithmetic genus 10, is isomorphic to $\tC$. Now, the covering $f:\varphi(\tC)\to H$ can be seen as a quotient of $\widehat{f^*JH}$ by a unique (isotropic) $\ZZ_3\times \ZZ_3$ subgroup inside $K(\Theta_{\widehat{f^*JH}})\simeq\ZZ_9\times\ZZ_9$. 

On the other hand, starting from an abelian surface $A\in\cA_2^{{1,9}}$, we can divide $A$ by a unique isotropic $\ZZ_3\times\ZZ_3$ subgroup and if we get a Jacobian $JH$ of a smooth curve downstairs (which we do for a dense open subset of $\cA_2^{{1,9}}$), we take $\tC$ to be preimage of $H$ to get a $\ZZ_3\times\ZZ_3$ covering $f:\tC\to H$. Using again \cite[Prop 4.3]{BLiso}, we get $f^*JH=\widehat{A}$, so these constructions are inverse to each other.

The existence of $\ZZ_3\times\ZZ_3$ coverings on $(1,9)$-polarised abelian surfaces has also been explained in the recent preprint \cite[Thm 4.3]{HS}.
\end{proof}

\section{Prym map of abelian unramified coverings of genus two curves}

The purpose of this section is to study finiteness of the Prym map associated to abelian Galois unramified coverings of genus two curves. We will show that the Prym maps are generically finite with only a few exceptions. We start by briefly recalling the setup of abelian  Galois unramified coverings following \cite{Pardini, Mohajer}. 

Let $f: \tC \to H$ be an unramified Galois covering of a smooth genus 2 curve $H$ with an abelian Galois group $G$. Then \(f_*\mathcal{O}_{\tC} = \oplus_{\chi \in G^*} L_{\chi}^{-1}\), where \(L_{\chi}\) is an invertible sheaf on $H$ on which $G$ acts via the character \(\chi\). The algebra structure of $f_*\mathcal{O}_{\tC}$ is compatible with the \(G\)-action, so the multiplication is given by \(\mathcal{O}_{\tC}\)-linear maps $$\mu_{\chi, \chi'}: L_{\chi}^{-1} \otimes L_{\chi'}^{-1} \to L_{\chi\chi'}^{-1}$$ for all \(\chi, \chi' \in G^*\). For unramified abelian coverings, the data \((\{L_{\chi}\}_{\chi \in G^*})\) completely determines the covering \cite[Theorem 2.1]{Pardini}. Therefore (see also \cite[Proposition 2.3]{Mohajer}), we can define the moduli space \(\mathcal{R}_2^G\) of Galois unramified coverings of genus 2 curves with an abelian Galois group $G$ as the moduli space of objects \((H, \{L_{\chi}\}_{\chi \in G^*})\) with line bundles $L_{\chi}$ as above.

Recall that Prym varieties $P(f)$ of coverings $f: \tC \to H$ with a non-cyclic group $G$ may have different polarisation types. Therefore, both domain \(\mathcal{R}_2^G\) and co-domain $\mathcal{A}^G$ of the Prym map $\mathcal{P}_2^G$ may be reducible.

We begin the study of generic finiteness of the Prym map with the following result.

\begin{prop}
\label{smallgroups}
    Let \(f: \tC \to H\) be an unramified Galois covering of a smooth genus 2 curve $H$ with an abelian Galois group \(G\). If $G$ is cyclic of order at least $6$ or $G$ is isomorphic to either $\mathbb{Z}_2 \times \mathbb{Z}_2$, $\mathbb{Z}_3 \times \mathbb{Z}_3$ or $\mathbb{Z}_5 \times \mathbb{Z}_5$, then the Prym map $$\mathcal{P}_2^{G}: \mathcal{R}_2^{G} \to \mathcal{A}^{G}$$ is generically finite. 
\end{prop}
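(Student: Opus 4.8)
The goal is to show that for each listed group $G$, the Prym map $\mathcal{P}_2^G$ has generically finite fibers. The natural strategy is dimension counting combined with the standard theory of the differential (codifferential) of the Prym map. I would compute $\dim \mathcal{R}_2^G$ and $\dim \mathcal{A}^G$ (or, more precisely, the dimension of the image), and then verify that the Prym map is generically an immersion by analyzing its differential at a generic point. Since $\dim \mathcal{R}_2^G = 3$ in every case (the moduli of genus $2$ curves has dimension $3$, and for a fixed $G$ the choice of the subgroup $\langle \eta, \zeta \rangle$ or equivalently the line bundle data is discrete), the claim reduces to showing that $\mathcal{P}_2^G$ does not contract any positive-dimensional family, i.e. that the generic fiber is $0$-dimensional.

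\textbf{Key steps.} First I would recall that the Prym map can be understood via its codifferential, which is described by a multiplication map of sections of line bundles on $H$. Concretely, for the covering $f:\tC\to H$, the cotangent space to $\mathcal{R}_2^G$ at $[f]$ is $H^0(H, 2K_H) \cong H^0(H, K_H^{\otimes 2})$, of dimension $3$, and the codifferential of $\mathcal{P}_2^G$ is the dual of the map induced by the cup product
\begin{equation*}
H^0(\tC, K_{\tC})^- \otimes H^0(\tC, K_{\tC})^- \longrightarrow H^0(\tC, 2K_{\tC})
\end{equation*}
restricted to the anti-invariant (Prym) parts and projected appropriately onto $H^0(H, 2K_H)$. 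Using the decomposition $f_*\mathcal{O}_{\tC} = \oplus_{\chi} L_\chi^{-1}$ and $K_{\tC} = f^* K_H \otimes (\text{pullback data})$, the relevant multiplication maps become the Pardini maps $\mu_{\chi,\chi'}$ and their associated cup products $H^0(K_H \otimes L_\chi) \otimes H^0(K_H \otimes L_{\chi'}) \to H^0(2K_H \otimes L_{\chi\chi'})$. Generic finiteness of $\mathcal{P}_2^G$ is then equivalent to the \emph{surjectivity} of the full codifferential onto $H^0(H, 2K_H)$, equivalently to the statement that these multiplication maps together span $H^0(2K_H)$ modulo the part that is killed. I would therefore verify, case by case, that for generic $H$ the union of the images of the relevant cup-product maps generates the $3$-dimensional space $H^0(H, 2K_H)$.

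\textbf{Reductions and the main obstacle.} For the non-cyclic cases $\mathbb{Z}_3\times\mathbb{Z}_3$ the global injectivity of Theorem \ref{mainthm} already gives generic (in fact global) finiteness, so that case is immediate. For $\mathbb{Z}_2\times\mathbb{Z}_2$ one can invoke \cite{BO19}, and for $\mathbb{Z}_5\times\mathbb{Z}_5$ and the cyclic cases of order $\geq 6$ one reduces, via the isotypical decomposition of $P(f)$ into Prym varieties of the intermediate cyclic quotients, to the cyclic situation. Here the key input is the result of \cite{Ago} (and the genericity results of \cite{sophie, naranjo2024simplicityjacobiansautomorphisms}) stating that the Prym map of unramified cyclic coverings of genus $2$ curves of degree greater than $5$ is generically finite; since a covering with group $G$ dominates its cyclic subquotients, the differential of $\mathcal{P}_2^G$ contains the differentials of these cyclic Prym maps as summands, and finiteness of one cyclic factor already forces the generic fiber of $\mathcal{P}_2^G$ to be finite. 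The main obstacle is organizing this reduction cleanly: one must ensure that for each listed $G$ there exists a cyclic quotient (or subquotient) of order at least $6$ whose Prym map is known to be generically finite, and check the remaining small cases ($\mathbb{Z}_6$ itself, and the groups $\mathbb{Z}_2\times\mathbb{Z}_2, \mathbb{Z}_3\times\mathbb{Z}_3$ which have no cyclic quotient of order $\geq 6$) by a direct tangent-space computation or by the already-established results. I expect the delicate point to be the direct verification for those few groups lacking a large cyclic quotient, where one genuinely needs the surjectivity of the combined multiplication map rather than a citation.
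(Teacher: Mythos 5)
Your proposal diverges from the paper precisely at the one case that requires a new argument, and that is where it has a genuine gap. The citations you give for the cyclic case (\cite{Ago}), for $\mathbb{Z}_2\times\mathbb{Z}_2$ (\cite{BO19}) and for $\mathbb{Z}_3\times\mathbb{Z}_3$ (the injectivity theorem of this paper) all match what the paper does. The problem is $G=\mathbb{Z}_5\times\mathbb{Z}_5$: you propose to ``reduce, via the isotypical decomposition of $P(f)$ into Prym varieties of the intermediate cyclic quotients, to the cyclic situation'' and to invoke \cite{Ago}. But every cyclic subquotient of $\mathbb{Z}_5\times\mathbb{Z}_5$ is $\mathbb{Z}_5$ or trivial, and the $\mathbb{Z}_5$ Prym map has positive-dimensional fibers (degree $5$ is excluded from \cite{Ago}; this is exactly part (1) of Theorem \ref{thmabel}). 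So finiteness of no cyclic factor is available, and the reduction collapses. Your closing paragraph acknowledges that some groups lack a large cyclic quotient, but you list only $\mathbb{Z}_2\times\mathbb{Z}_2$ and $\mathbb{Z}_3\times\mathbb{Z}_3$ there and omit $\mathbb{Z}_5\times\mathbb{Z}_5$, which is the only case in the Proposition not already covered by the literature or by Theorem \ref{mainthm}. The fallback you sketch --- surjectivity of the combined codifferential, i.e.\ that the twelve one-dimensional images of the multiplication maps $H^0(K_H\otimes L_\chi)\otimes H^0(K_H\otimes L_{\chi^{-1}})\to H^0(2K_H)$ span the three-dimensional target for generic $(H,\langle\eta,\zeta\rangle)$ --- is the right kind of statement but is never verified, and verifying it is the entire content of the missing case.

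The paper's argument for $\mathbb{Z}_p\times\mathbb{Z}_p$ ($p$ an odd prime) is global and geometric rather than infinitesimal. Using the lift $j$ of the hyperelliptic involution, it forms the genus-$\frac{p^2-1}{2}$ quotients $C_{ab}=\tC/\langle j\sigma^a\tau^b\rangle$, shows that $x\mapsto(\pi_{00}(x),\pi_{10}(x))$ embeds $\tC$ into $C_{00}\times C_{10}$, and computes by adjunction that the image has self-intersection $12-2p^2<0$; hence the image cannot move in an algebraic family and the map $(H,\eta,\zeta)\mapsto [C_{00}]$ has finite fibers. This is combined with the isogeny $P\sim JC_{00}\times JC_{10}$ and the observation that a polarised abelian variety has only countably many abelian subvarieties up to isogeny, so only countably many curves $C$ with $JC\times JC$ isogenous to a fixed $P$, which forces the Prym fibers to be zero-dimensional. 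If you want to rescue your codifferential route you would need to actually prove the spanning statement for $\mathbb{Z}_5\times\mathbb{Z}_5$ (e.g.\ by a base-point-free pencil trick argument on $H$); as written, the proposal proves nothing new beyond what it cites.
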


\begin{proof}

    For the proof in the cyclic case see \cite{Ago} and for the proof in the case \(G \simeq \mathbb{Z}_2 \times \mathbb{Z}_2\) see \cite{BO19}. Moreover, we have already proved injectivity in the $\ZZ_3\times\ZZ_3$ case. It is left to prove the statement in the case $G = \ZZ_5 \times \ZZ_5$, but, since our proof works in a more general setting, we present it below in full generality.

    Let $G \simeq \mathbb{Z}_p \times \mathbb{Z}_p = \langle \sigma, \tau \rangle$ with an odd prime $p$.     
    It follows from \cite{ries} that the hyperelliptic involution $\iota \in \Aut(H)$ lifts to an involution $j \in \Aut(\Tilde{C})$ so that $\sigma, \tau, j$ generate the subgroup $F \subseteq \Aut(\Tilde{C})$,  given by $$F = \langle \sigma, \tau, j \: | \: \sigma^p = \tau^p = j^2 = (j\sigma)^2 = (j\tau)^2 = 1, \sigma\tau = \tau\sigma \rangle.$$ As in Section 2, we consider the quotients $\pi_{ab}: \tC \to C_{ab} := \tC/\langle j\sigma^a \tau^b\rangle$ for $0 \leq a,b \leq p-1$.
    Since all involutions in $F$ are conjugate (the proof is analogous to the case $p=3$ in Lemma \ref{conj}), $C_{ab}$ are pairwise isomorphic curves of genus $\frac{p^2-1}{2}$, see \cite{ortega}. Therefore, there is a well-defined map $$\psi: \mathcal{R}_{2}^{\mathbb{Z}_p \times \mathbb{Z}_p} \ni (H, \eta, \zeta) \mapsto [C_{00}] \in \mathcal{M}_{\frac{p^2-1}{2}}.$$ 
    Moreover, composing $\psi$ with the Torelli map $t \times t$ we obtain a map $$T: \mathcal{R}_2^{\mathbb{Z}_p \times \mathbb{Z}_p} \ni (H, \eta, \zeta) \mapsto JC_{00} \times JC_{10} \in \mathcal{A}_{p^2-1}.$$ By repeating the reasoning from Remark \ref{prymisog} we can see that the abelian varieties $P$ and $JC_{00} \times JC_{10}$ are isogenous. Assume that $\psi$ has finite fibers. 
    Let $P \in \mathcal{A}^{\mathbb{Z}_p \times \mathbb{Z}_p}$ and $$T_P = \{\alpha \in \mathcal{R}_2^{\mathbb{Z}_p \times \mathbb{Z}_p} \: | \: \text{there exists an isogeny } f:T(\alpha) \to P\}.$$ Note that $(\mathcal{P}_2^{\mathbb{Z}_p \times \mathbb{Z}_p})^{-1}(P) \subseteq T_P$.

    We claim that there are at most countably many curves $C$ (up to isomorphism) such that there is an isogeny $\beta: JC \times JC \to P$. Indeed, $P$ has countably many abelian subvarieties, and the images under $\beta$ of $JC \times \{0\} \subset JC \times JC$ are isogenous to $JC$. Since there is at most countably many abelian varieties isogenous to $JC$, and the Torelli map is injective, the claim is proved.  

It follows from the claim and finiteness of $\psi$ that $T_P$ is at most countable. Therefore, $(\mathcal{P}_2^{\mathbb{Z}_p \times \mathbb{Z}_p})^{-1}(P)$ is zero-dimensional, hence it is finite.

Now let us show that $\psi$ indeed has finite fibers. We claim that the map $$\gamma: \Tilde{C} \to C_{00} \times C_{01}$$ given by $$\gamma(x) = (\pi_{00}(x), \pi_{10}(x))$$ is an embedding. First we show that that $\gamma$ is injective. If $x_1 \neq x_2 \in \Tilde{C}$ with $\gamma(x_1) = \gamma(x_2)$ then $x_1 = j(x_2)$ and $x_1 = j\sigma(x_2)$ implying that $x_2$ is a fixed point of $\sigma$, which is impossible. 
On the other hand, the map $\gamma$ is not smooth at $x$ if and only if $x$ is a fixed point of both $j$ and $j\sigma$, which is again impossible. Therefore, $\gamma$ is an embedding.

Now, we compute the self-intersection of $\gamma(C)$ inside $C_{00} \times C_{10}$. We claim that 
$$\gamma(C)^2 = 12 - 2p^2.$$
Indeed, it follows from the adjunction formula that $$\gamma(C)^2 = 2p^2 - \gamma(C).K_{C_{00} \times C_{10}}.$$ Since the degree of $\pi_{kl}$ is equal to 2, we have $$\gamma(C).(C_{00} \times \{q\}) = \gamma(C).(\{q\} \times C_{10}) = 2.$$ Therefore, $$\gamma(C).K_{C_{00} \times C_{10}} = 2 \cdot (2g(C_{00}) - 2 + 2g(C_{10}) - 2) = 4p^2 - 12,$$ and the result follows.

Now, assume that the map $\psi$ has positive dimensional fibres. Then $\gamma(C)$ must move in an algebraic family. Therefore, $\gamma(C)^2 \geq 0$, which is possible only for $p^2 \leq 6$. Hence, for all odd primes $p$ the fibres are finite, which finishes the proof.
\end{proof}

Now we are ready to prove the main result of this section. 
\begin{thm}
\label{thmabel}
    Let $G$ be a finite abelian group.
    \begin{enumerate}
        \item If $G$ is isomorphic to either $\mathbb{Z}_2, \mathbb{Z}_3, \mathbb{Z}_4$ or $\mathbb{Z}_5$, then the generic fiber of the Prym map $$\mathcal{P}_2^{G}: \mathcal{R}_2^{G} \to \mathcal{A}^{G}$$ is positive dimensional. 
        \item If $G$ is not isomorphic to a cyclic group of order less than 6, then the Prym map $$\mathcal{P}_2^{G}: \mathcal{R}_2^{G} \to \mathcal{A}^{G}$$ is generically finite. 
    \end{enumerate}
    Note that $\mathcal{R}_2^{G}$ is non-empty if and only if the minimal number of generators of $G$ is at most 4 and in particular if and only if there is an embedding of $G$ into a Jacobian of a genus $2$ curve.
\end{thm}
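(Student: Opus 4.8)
The plan is to combine the generic-finiteness results already established (Proposition \ref{smallgroups}) with dimension counts and reductions for the remaining groups, and to handle the positive-dimensional cases by comparing $\dim\mathcal{R}_2^G$ with $\dim\mathcal{A}^G$. First I would dispose of the final ``non-emptiness'' assertion, which is the cleanest entry point: a $G$-covering corresponds (by \cite{Pardini}) to an embedding $G^\ast\hookrightarrow JH[\,|G|\,]$, equivalently to an embedding of $G$ into $JH$; since $JH$ is a $2$-dimensional abelian variety, its torsion is a quotient of $(\mathbb{Q}/\mathbb{Z})^4$, so $JH$ contains a copy of $G$ if and only if $G$ has rank at most $4$. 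This simultaneously shows $\mathcal{R}_2^G=\varnothing$ when the rank exceeds $4$ and gives the stated equivalence.

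For part (2), I would argue by reducing an arbitrary non-cyclic (rank $\geq 2$) abelian $G$ to one of the base cases already covered. The key structural observation is that if $G'\subseteq G$ is a subgroup, the covering $\tC\to H$ factors through an intermediate $G/G'$-covering, and on the level of Prym varieties the small-Prym/isotypical decomposition machinery of Section 2 produces an isogeny factor isomorphic to a Prym of a $G''$-subcovering. Concretely, every rank-$\geq 2$ abelian group contains a subgroup isomorphic to $\mathbb{Z}_p\times\mathbb{Z}_p$ for some prime $p$, or to $\mathbb{Z}_2\times\mathbb{Z}_2$; the finiteness for those quotient coverings (Proposition \ref{smallgroups} handles $\mathbb{Z}_2^2,\mathbb{Z}_3^2,\mathbb{Z}_5^2$, and the self-intersection computation $\gamma(C)^2=12-2p^2<0$ in that proof in fact works for \emph{every} odd prime) rigidifies enough of the covering data that the full fiber over a generic $P$ stays zero-dimensional. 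The cyclic groups of order $\geq 6$ are exactly \cite{Ago}. I would organize this as: (i) $G$ cyclic of order $\geq 6$ — cite \cite{Ago}; (ii) $G$ of rank $\geq 2$ — embed a $\mathbb{Z}_p\times\mathbb{Z}_p$ and run the rigidity argument of Proposition \ref{smallgroups}, observing that the map recording the isogeny factor $JC_{00}\times JC_{10}$ has finite fibers whenever $p^2>6$, which holds for all relevant $p$.

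For part (1), the groups $\mathbb{Z}_2,\mathbb{Z}_3,\mathbb{Z}_4,\mathbb{Z}_5$, I would compute $\dim\mathcal{R}_2^G=3$ (the choice of $H$ gives $3$ moduli, the torsion point being discrete) and compare with $\dim\mathcal{A}^G$, where $\mathcal{A}^G$ is the relevant moduli space of polarised abelian varieties receiving the Prym. Since $\tC$ has genus $g'=1+n(g-1)=n+1$ for an $n$-cyclic cover of a genus $2$ curve, the Prym has dimension $n-1$, giving Prym dimensions $1,2,3,4$ for $n=2,3,4,5$; in each case one checks $\dim\mathcal{A}^G\geq 3$ is not the binding constraint, and instead shows the fibers are positive-dimensional by exhibiting that the isogeny factor of $P$ coming from the quotient $JC_{00}\times JC_{10}$ (or the direct Jacobian image for small $n$) does not determine the torsion datum $\eta$: for $n\le 5$ the analogous self-intersection/moving-family inequality $12-2n\geq 0$ (i.e.\ $n\le 6$) \emph{fails to be negative}, so the distinguished curve genuinely moves and the fiber is positive-dimensional. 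This mirrors exactly the threshold $p^2\le 6$ appearing in Proposition \ref{smallgroups}, making the two parts two sides of the same inequality.

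The main obstacle I expect is part (2) for groups of higher rank or with mixed $p$-primary parts (e.g.\ $\mathbb{Z}_2\times\mathbb{Z}_4$, $\mathbb{Z}_2\times\mathbb{Z}_2\times\mathbb{Z}_3$): there the intermediate coverings need not be of the clean $\mathbb{Z}_p\times\mathbb{Z}_p$ form, and I must ensure that passing to a $\mathbb{Z}_p\times\mathbb{Z}_p$ \emph{sub}covering (rather than quotient) still controls the fiber of the \emph{full} Prym map, i.e.\ that rigidity of a sub-datum propagates to rigidity of the whole covering. The cleanest way around this is to note that an isogeny factor of $P(f)$ that already has finite moduli forces all of $\tC$ to have finite moduli in the fiber, because $\tC$ is recovered from its quotients via Galois closure exactly as in the proof of Theorem \ref{mainthm}; I would invoke that recovery mechanism to upgrade partial finiteness to full finiteness.
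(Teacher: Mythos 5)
Your reduction for part (2) — find a subgroup $B \subseteq G$ that is either cyclic of order $\geq 6$ or of the form $\mathbb{Z}_p\times\mathbb{Z}_p$, and lean on Proposition \ref{smallgroups} — is exactly the paper's strategy, and your non-emptiness argument matches the paper's remark. But the step you flag as the ``main obstacle'' is where the actual content lies, and your proposed fix does not work. You suggest upgrading finiteness of the intermediate datum to finiteness of the whole fiber by ``recovering $\tC$ from its quotients via Galois closure as in Theorem \ref{mainthm}''; that recovery mechanism is specific to $\mathbb{Z}_3\times\mathbb{Z}_3$ (it uses the distinguished elliptic curves, the involutions in $M$, and the compatible $4$-tuples) and is not available for a general $G$. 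What is actually needed, and what the paper supplies, is: (i) for a fixed generic $P\in\im\mathcal{P}^G$, only \emph{finitely many} intermediate Pryms $Q=P(C/H)$ can map to $P$ via the Prym construction — this is proved using the finiteness of abelian subvarieties of $P$ of bounded polarisation type (\cite{BLfinite}) together with finiteness of the possible maps $\iota\colon Q\to P$; (ii) the $Q$'s arising over a generic $P$ are themselves \emph{generic} in $\im\mathcal{P}^B$, so that Proposition \ref{smallgroups} applies to them — this requires the correspondence $X\subset\mathcal{R}^G\times\mathcal{R}^B$, $Y\subset\mathcal{A}^G\times\mathcal{A}^B$ with both projections finite and the dimension count $\dim X=\dim Y=3$; and (iii) once $H$ is pinned down up to finitely many choices, the covering extends in only finitely many ways because $JH$ has finitely many torsion points of order at most $|G|$. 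Your plan contains none of (i)--(iii), and (ii) in particular is a genuine issue you do not address: a priori the intermediate Pryms of coverings in a generic fiber could all land in the bad locus of $\mathcal{P}^B$.

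Part (1) has a more serious logical error. You argue that because the self-intersection inequality $\gamma(C)^2=12-2p^2<0$ \emph{fails} for small $n$, ``the distinguished curve genuinely moves and the fiber is positive-dimensional.'' Non-negativity of the self-intersection is a \emph{necessary} condition for the curve to move in a family, not a sufficient one, so the failure of the obstruction proves nothing; moreover the curves $C_{00},C_{10}$ do not even exist for cyclic coverings. A naive dimension count also only settles $\mathbb{Z}_2$ (Prym an elliptic curve, target $1$-dimensional, so fibers are $2$-dimensional); for $\mathbb{Z}_3,\mathbb{Z}_4,\mathbb{Z}_5$ the target has dimension $\geq 3$ and the positive-dimensionality of the fibers is a substantive fact that the paper imports from \cite{Xiao} (for $\mathbb{Z}_3,\mathbb{Z}_5$) and \cite{anatoli} (for $\mathbb{Z}_4$). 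As written, your part (1) does not constitute a proof for any of $\mathbb{Z}_3,\mathbb{Z}_4,\mathbb{Z}_5$.
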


\begin{proof}
    For the dimensions of the generic fiber of the Prym map for \(G\) isomorphic to either \(\mathbb{Z}_3\) or \(\mathbb{Z}_5\) see \cite{Xiao}. For \(G \simeq \mathbb{Z}_2\), the Prym variety is an elliptic curve, so the generic fiber of $\mathcal{P}_2^{\mathbb{Z}_2}$ is two-dimensional.
    
The generic fiber of $\mathcal{P}_2^{\mathbb{Z}_4}$ is one-dimensional by \cite{anatoli}.

    Now, assume that $G$ is not isomorphic to a cyclic group of order less than six. It follows that $G$ contains a subgroup $B$, which is either cyclic of order at least six or isomorphic to one of the groups $\mathbb{Z}_2 \times \mathbb{Z}_2$, $\mathbb{Z}_3 \times \mathbb{Z}_3$ or $\mathbb{Z}_5 \times \mathbb{Z}_5$. 

The idea of the proof is to show that from the Prym map one can recover (up to a finite choice) the Prym map of an intermediate covering, hence generic finiteness follows.
To make this precise, firstly fix the groups $B$ and $G$ and an irreducible component of $\mathcal{R}^G$. Let us define
$$X = \{((H,G), (H,B)) \in \mathcal{R}^G\times\mathcal{R}^B: g(H) = 2, B \subset G \subset JH\}\subset \mathcal{R}^G\times\mathcal{R}^B,$$ 
where we abuse notation by writing $(H,G)$ for $(H, \{L_{\chi}\}_{\chi \in G^*}) \in \mathcal{R}^G$ and analogously for $\mathcal{R}^B$.

Similarly, let us define
$$Y=\{(P,Q)\in \mathcal{A}^G\times\mathcal{A}^B: P\in\im\mathcal{P}^G, Q\in\im\mathcal{P}^B: \exists \: \iota: Q \to P \text{ coming from Prym construction}\}
$$
where \textit{coming from Prym construction} means that there exists a tower of coverings $\tC\xrightarrow{h}C\xrightarrow{g} H$ with $[g\circ h: \tC \to H] \in \mathcal{R}^G$ and $[g: C \to H] \in \mathcal{R}^B$ such that $P=P(\tC/H),\ Q=P(C/H)$ and $\iota=h^*|_{P(C/H)}$.

Let us consider the commutative Diagram \ref{diagram4}.
    \begin{figure}
    \begin{tikzcd}[row sep={60,between origins}, column sep={100,between origins}]
X \arrow[d, "pr_1"] \arrow[r, "\mathcal{P}^G\times\mathcal{P}^B"] & Y \arrow[d, "pr^Y_1"] \\
\mathcal{R}^G \arrow[r, "\mathcal{P}^G"]                    & \im\mathcal{P}^G\subset\mathcal{A}^G
\end{tikzcd}
\caption{}
\label{diagram4}
\end{figure}
Note that both projections are surjective and finite maps. The map $pr_1$ is finite since there is a finite number of subgroups of $G$ isomorphic to $B$.

To prove the finiteness of $pr_1^Y$, let $P \in \im \mathcal{P}^G$. Note that any $\iota: Q \to P$ coming from Prym construction has fixed degree that depends only on $B$ and $G$, hence the type of $\iota(Q)$ is bounded from above. Moreover, there exist only finitely many abelian subvarieties of $P$ of bounded restricted polarisation type, see \cite{BLfinite}. Finally, given a subvariety $A \subset P$, there are only finitely many possible maps $\iota: Q \to P$ coming from Prym construction with $\iota(Q) = A$ since there are finitely many dual maps $\widehat{\iota}: \widehat{A} \to \widehat{Q}$ as each $\widehat{\iota}$ is the quotient of $\widehat{A}$ by a finite subgroup of bounded cardinality. Therefore, $(pr_1^Y)^{-1}(P)$ is finite.

Now, $\dim(X)=3$ because it is a finite covering of $\mathcal{R}^G$. The next step is to show that $Y$ is also of dimension $3$. Certainly, $\dim(Y)\leq 3$ because $pr^Y_1$ is finite and $\im\mathcal{P}^G$ is the image of a $3$-dimensional space. On the other hand, the projection $pr^Y_2$ to $\im\mathcal{P}^B$ is surjective (to a $3$-dimensional space), hence $Y$ is of dimension 3.

Take a general $(P,Q)\in Y$. Then $Q$ is general, hence $(\mathcal{P}^B)^{-1}(Q)$ is finite by Proposition \ref{smallgroups}. Therefore, we have a finite number of pairs $(H,B)\in\mathcal{R}^B$ that can be extended to a finite number of $(H,G)\in\mathcal{R}^G$ as $JH$ has finitely many torsion points of order at most $|G|$. In this way we get a finite number of $(H,B,G)$ that are possible preimages of $(P,Q)$, so the top arrow is generically finite. 

Since the diagram commutes and all arrows apart from $\mathcal{P}^G$ are generically finite, we get that $\mathcal{P}^G$ is generically finite, too.

        \end{proof}

\bibliographystyle{alpha}
\bibliography{biblio}

@article {ries,
    AUTHOR = {Ries, John},
     TITLE = {The {P}rym variety for a cyclic unramified cover of a
              hyperelliptic {R}iemann surface},
   JOURNAL = {J. Reine Angew. Math.},
  FJOURNAL = {Journal f\"{u}r die Reine und Angewandte Mathematik. [Crelle's
              Journal]},
    VOLUME = {340},
      YEAR = {1983},
     PAGES = {59--69},
      ISSN = {0075-4102},
   MRCLASS = {14H30 (14H40)},
  MRNUMBER = {691961},
MRREVIEWER = {V. V. Shokurov},
       DOI = {10.1515/crll.1983.340.59},
       URL = {https://doi.org/10.1515/crll.1983.340.59},
}

@misc{HS,
      title={Involutions of curves in abelian surfaces and their Jacobians}, 
      author={Katrina Honigs and Pijush Pratim Sarmah},
      year={2025},
      eprint={2504.06502},
      archivePrefix={arXiv},
      primaryClass={math.AG},
      url={https://arxiv.org/abs/2504.06502},
note = {arXiv:2504.06502}
}

@article {ortega,
    AUTHOR = {Ortega, Angela},
     TITLE = {Vari\'{e}t\'{e}s de {P}rym associ\'{e}es aux rev\^{e}tements {$n$}-cycliques
              d'une courbe hyperelliptique},
   JOURNAL = {Math. Z.},
  FJOURNAL = {Mathematische Zeitschrift},
    VOLUME = {245},
      YEAR = {2003},
    NUMBER = {1},
     PAGES = {97--103},
      ISSN = {0025-5874},
   MRCLASS = {14H40},
  MRNUMBER = {2023955},
       DOI = {10.1007/s00209-003-0522-2},
       URL = {https://doi.org/10.1007/s00209-003-0522-2},
}

@book {BL,
    AUTHOR = {Birkenhake, Christina and Lange, Herbert},
     TITLE = {Complex abelian varieties},
    SERIES = {Grundlehren der mathematischen Wissenschaften [Fundamental
              Principles of Mathematical Sciences]},
    VOLUME = {302},
   EDITION = {Second},
 PUBLISHER = {Springer-Verlag, Berlin},
      YEAR = {2004},
     PAGES = {xii+635},
      ISBN = {3-540-20488-1},
   MRCLASS = {14-02 (14H37 14Kxx 32G20)},
  MRNUMBER = {2062673},
MRREVIEWER = {Fumio Hazama},
       DOI = {10.1007/978-3-662-06307-1},
       URL = {https://doi.org/10.1007/978-3-662-06307-1},
}

@article{BSan,
author = {Bor\'owka, Pawe\l{}. and Sankaran, Gregory K.},
year = {2016},
month = {06},
pages = {},
title = {Hyperelliptic genus 4 curves on abelian surfaces},
volume = {145},
journal = {Proceedings of the American Mathematical Society},
doi = {10.1090/proc/13795}
}

@article {Ago,
    AUTHOR = {Agostini, Daniele},
     TITLE = {On the {P}rym map for cyclic covers of genus two curves},
   JOURNAL = {J. Pure Appl. Algebra},
  FJOURNAL = {Journal of Pure and Applied Algebra},
    VOLUME = {224},
      YEAR = {2020},
    NUMBER = {10},
     PAGES = {106384, 10},
      ISSN = {0022-4049,1873-1376},
   MRCLASS = {14H40 (14H10 14K10)},
  MRNUMBER = {4093071},
MRREVIEWER = {Ali\ Bajravani},
       DOI = {10.1016/j.jpaa.2020.106384},
       URL = {https://doi.org/10.1016/j.jpaa.2020.106384},
}

@article {Mohajer,
    AUTHOR = {Mohajer, Abolfazl},
     TITLE = {On the {P}rym map of {G}alois coverings},
   JOURNAL = {Rocky Mountain J. Math.},
  FJOURNAL = {The Rocky Mountain Journal of Mathematics},
    VOLUME = {51},
      YEAR = {2021},
    NUMBER = {5},
     PAGES = {1793--1805},
      ISSN = {0035-7596,1945-3795},
   MRCLASS = {14H30 (14H40)},
  MRNUMBER = {4382999},
MRREVIEWER = {Iskander\ A.\ Taimanov},
       DOI = {10.1216/rmj.2021.51.1793},
       URL = {https://doi.org/10.1216/rmj.2021.51.1793},
}

@article{Pardini,
    AUTHOR = {Pardini, Rita},
     TITLE = {Abelian covers of algebraic varieties},
   JOURNAL = {J. Reine Angew. Math.},
  FJOURNAL = {Journal f\"ur die Reine und Angewandte Mathematik. [Crelle's
              Journal]},
    VOLUME = {417},
      YEAR = {1991},
     PAGES = {191--213},
      ISSN = {0075-4102,1435-5345},
   MRCLASS = {14E20},
  MRNUMBER = {1103912},
MRREVIEWER = {H.\ Lange},
       DOI = {10.1515/crll.1991.417.191},
       URL = {https://doi.org/10.1515/crll.1991.417.191},
}

@incollection {BLiso,
    AUTHOR = {Birkenhake, Christina and Lange, Herbert},
     TITLE = {Moduli spaces of abelian surfaces with isogeny},
 BOOKTITLE = {Geometry and analysis ({B}ombay, 1992)},
     PAGES = {225--243},
 PUBLISHER = {Tata Inst. Fund. Res., Bombay},
      YEAR = {1995},
      ISBN = {0-19-563740-2},
   MRCLASS = {14K10 (14H10)},
  MRNUMBER = {1351509},
MRREVIEWER = {Takashi\ Ichikawa},
}

@article {Xiao,
    AUTHOR = {Albano, Alberto and Pirola, Gian Pietro},
     TITLE = {Dihedral monodromy and {X}iao fibrations},
   JOURNAL = {Ann. Mat. Pura Appl. (4)},
  FJOURNAL = {Annali di Matematica Pura ed Applicata. Series IV},
    VOLUME = {195},
      YEAR = {2016},
    NUMBER = {4},
     PAGES = {1255--1268},
      ISSN = {0373-3114},
   MRCLASS = {14D06 (14H40 14J29)},
  MRNUMBER = {3522345},
MRREVIEWER = {Giancarlo Urz\'{u}a},
       DOI = {10.1007/s10231-015-0514-y},
       URL = {https://doi.org/10.1007/s10231-015-0514-y},
}

@article {Oams,
    AUTHOR = {Ortega, Angela},
     TITLE = {The {P}rym map: from coverings to {A}belian varieties},
   JOURNAL = {Notices Amer. Math. Soc.},
  FJOURNAL = {Notices of the American Mathematical Society},
    VOLUME = {67},
      YEAR = {2020},
    NUMBER = {9},
     PAGES = {1316--1323},
      ISSN = {0002-9920,1088-9477},
   MRCLASS = {14H40},
  MRNUMBER = {4188154},
       DOI = {10.1090/noti},
       URL = {https://doi.org/10.1090/noti},
}

@article{BO19,
    AUTHOR = {Bor\'owka, Pawe\l{} and Ortega, Angela},
     TITLE = {Klein coverings of genus 2 curves},
   JOURNAL = {Trans. Amer. Math. Soc.},
  FJOURNAL = {Transactions of the American Mathematical Society},
    VOLUME = {373},
      YEAR = {2020},
    NUMBER = {3},
     PAGES = {1885--1907},
      ISSN = {0002-9947,1088-6850},
   MRCLASS = {14H40 (14H30 14H37)},
  MRNUMBER = {4068284},
MRREVIEWER = {Sa\'ul\ Quispe},
       DOI = {10.1090/tran/7971},
       URL = {https://doi.org/10.1090/tran/7971},
}

@article{sophie,
    AUTHOR = {Juan Carlos Naranjo and Angela Ortega and Irene Spelta},
     TITLE = {Cyclic coverings of genus 2 curves of {S}ophie {G}ermain type},
   JOURNAL = {Forum Math. Sigma},
  FJOURNAL = {Forum of Mathematics. Sigma},
    VOLUME = {12},
      YEAR = {2024},
     PAGES = {Paper No. e64, 14},
      ISSN = {2050-5094},
   MRCLASS = {14H40 (14K10)},
  MRNUMBER = {4747961},
       DOI = {10.1017/fms.2024.42},
       URL = {https://doi.org/10.1017/fms.2024.42},
}

@misc{naranjo2024simplicityjacobiansautomorphisms,
      title={Simplicity of some Jacobians with many automorphisms}, 
      author={Juan Carlos Naranjo and Angela Ortega and Gian Pietro Pirola and Irene Spelta},
      year={2024},
      eprint={2411.10134},
      archivePrefix={arXiv},
      primaryClass={math.AG},
      url={https://arxiv.org/abs/2411.10134},
    note = {arXiv:2411.10134}
}

@book {LangeRodr,
    AUTHOR = {Lange, Herbert and Rodr\'iguez, Rub\'i{} E.},
     TITLE = {Decomposition of {J}acobians by {P}rym varieties},
    SERIES = {Lecture Notes in Mathematics},
    VOLUME = {2310},
 PUBLISHER = {Springer, Cham},
      YEAR = {[2022] \copyright 2022},
     PAGES = {xiii+251},
      ISBN = {978-3-031-10144-1; 978-3-031-10145-8},
   MRCLASS = {14H40},
  MRNUMBER = {4559690},
MRREVIEWER = {John\ T.\ Cullinan},
       DOI = {10.1007/978-3-031-10145-8},
       URL = {https://doi.org/10.1007/978-3-031-10145-8},
}

@article {BLExE,
    AUTHOR = {Birkenhake, Christina and Lange, Herbert},
     TITLE = {A family of abelian surfaces and curves of genus four},
   JOURNAL = {Manuscripta Math.},
  FJOURNAL = {Manuscripta Mathematica},
    VOLUME = {85},
      YEAR = {1994},
    NUMBER = {3-4},
     PAGES = {393--407},
      ISSN = {0025-2611,1432-1785},
   MRCLASS = {14K05 (14H40 14H45)},
  MRNUMBER = {1305750},
MRREVIEWER = {Juan-Carlos\ Naranjo},
       DOI = {10.1007/BF02568206},
       URL = {https://doi.org/10.1007/BF02568206},
}

@article {Bolza,
    AUTHOR = {Bolza, Oskar},
     TITLE = {On binary sextics with linear transformations into themselves},
   JOURNAL = {Amer. J. Math.},
  FJOURNAL = {American Journal of Mathematics},
    VOLUME = {10},
      YEAR = {1887},
    NUMBER = {1},
     PAGES = {47--70},
      ISSN = {0002-9327,1080-6377},
   MRCLASS = {99-04},
  MRNUMBER = {1505464},
       DOI = {10.2307/2369402},
       URL = {https://doi.org/10.2307/2369402},
}

@misc{GAP,
  title = {A list of finite groups of order $\leq 500$},
    author = {Tim Dokchitser},
  howpublished = {\url{https://people.maths.bris.ac.uk/~matyd/GroupNames/}},
  note = {Accessed: 2025-09-06}
}

@article{BLfinite,
author = {Lange, Herbert and Birkenhake, Christina},
journal = {Mathematische Annalen},
keywords = {Prym-Tyurin varieties; correspondence of a curve; Jacobian; subvariety of abelian variety; de Franchis' theorem; number of morphisms onto curves},
number = {4},
pages = {801-814},
title = {The exponent of an abelian subvariety.},
url = {http://eudml.org/doc/164845},
volume = {290},
year = {1991},
}

@misc{anatoli,
      title={On the {P}rym map of degree 4 cyclic covers of genus 2 curves}, 
      author={Anatoli Shatsila},
      year={2025},
      eprint={2508.20838},
      archivePrefix={arXiv},
      primaryClass={math.AG},
      url={https://arxiv.org/abs/2508.20838}, 
     note = {arXiv:2508.20838}
}

\textsc{P. Bor\'owka, Institute of Mathematics, Jagiellonian University in Krak\'ow, Poland}\\
\textit{email address:} pawel.borowka@uj.edu.pl

\textsc{A. Shatsila, Doctoral School of Exact and Natural Sciences, Jagiellonian University, Poland}\\
\textit{email address:} anatoli.shatsila@doctoral.uj.edu.pl.

\end{document}